\definecolor{apple}{rgb}{0.30,0.75, 0.05}
\DeclareSymbolFont{cyrletters}{OT2}{wncyr}{m}{n}
\DeclareMathSymbol{\Sha}{\mathalpha}{cyrletters}{"58}
\DeclareMathSymbol{\Zhe}{\mathalpha}{cyrletters}{17}
\mathchardef\hyph="2D
\theoremstyle{plain}
\newtheorem{theorem}{Theorem}[section]
\newtheorem{lemma}[theorem]{Lemma}
\newtheorem{proposition}[theorem]{Proposition}
\newtheorem{question}[theorem]{Question}
\newtheorem*{theorem*}{Theorem}
\newtheorem*{problem*}{Problem}
\newtheorem*{question*}{Question}
\theoremstyle{definition}
\newtheorem{definition}[theorem]{Definition}
\newtheorem{remark}[theorem]{Remark}
\newtheorem{example}[theorem]{Example}
\numberwithin{equation}{section}
\newcommand{\ob}{\mathcal{OB}}
\newcommand{\leftexp}[2]{{\vphantom{#2}}^{#1}{#2}}
\newcommand{\weezer}{\leftexp{=}{\kern-0.23em\mathsf{W}}^{\kern-0.21em =}}
\begin{document}

\title{Relative group trisections}

\author[Castro]{Nickolas Andres Castro}
\address{Castro, Department of Mathematics, Rice University,
Houston, TX 77005}
\email{ncastro.math@rice.edu}
\urladdr{\url{https://nickcastromath.com/}}

\author[Joseph]{Jason Joseph}
\address{Joseph, North Carolina School of Science and Mathematics,
Morganton, NC 28655}
\email{jason.joseph@ncssm.edu}
\urladdr{\url{https://jmjoseph22.wordpress.com/}}

\author[McFaddin]{Patrick K. McFaddin}
\address{McFaddin, Department of Mathematics, Fordham University, 
New York, NY 10023}
\email{pkmcfaddin@gmail.com}
\urladdr{\url{http://mcfaddin.github.io/}}

\begin{abstract}
Trisections of closed 4-manifolds, first defined and studied by Gay and Kirby, have proved to be a useful tool in the systematic analysis of 4-manifolds via handlebodies. Subsequent work of Abrams, Gay, and Kirby established a connection with the algebraic notion of a group trisection, which strikingly defines a one-to-one correspondence. We generalize the notion of a group trisection to the non-closed case by defining and studying relative group trisections. We establish an analogous one-to-one correspondence between relative trisections and relative group trisections up to equivalence. The key lemma in the construction may be of independent interest, as it generalizes the classical fact that there is a unique handlebody extension of a surface realizing a given surjection. Moreover, we establish a functorial relationship between relative trisections of manifolds and groups, extending work of Klug in the closed case.
\end{abstract}

\maketitle

\section{Introduction}
\label{section:introduction}
In this paper we investigate an algebraic structure associated to a smooth trisected $4$--manifold relative to its non-empty, connected boundary, called a \emph{relative group trisection}. This is a natural extension of the group-theoretic framework established by Abrams, Gay, and Kirby in \cite{grouptri}, which includes smooth manifolds with boundary. In both the closed and compact-with-boundary setting, a trisected smooth $4$--manifold $X$ is decomposed into three diffeomorphic $4$--dimensional $1$-handlebodies. The differences and similarities lie in how the pieces intersect. In the closed setting, the triple intersection is a closed surface, and the pairwise intersections are $3$--dimensional handlebodies, which can be thought of as cobordism from a closed surface to the empty set equipped with a Morse function with only index 2 critical points. In the relative setting (i.e., manifolds with boundary), the triple intersection is a surface with boundary, and the pairwise intersections are specific types of compression bodies (defined in Section~\ref{subsection:rel_trisections}), which may be viewed as cobordisms from a compact surface with boundary to another surface with boundary equipped with a Morse function with only index 2 critical points. By taking the fundamental group of each of the pieces, along with their double and triple intersections, we obtain the structure of a trisected, finitely presented group $G = \pi_1(X^4)$. Presented herein is an inverse to the fundamental group map $\pi_1$, viewed as a function on the set of diffeomorphism classes of relatively trisected 4--manifolds with boundary. This shows that every finitely presented group can be given the structure of a (relative) trisection.

\begin{theorem*}[see \ref{thm:set_equiv_relative}]
There is a map $\mathcal{M}$ from the set of relative group trisections up to isomorphism to the set of relatively trisected manifolds up to diffeomorphism. Moreover, $\pi_1 \circ\mathcal{M}$ is the identity up to isomorphism of relatively trisected groups, and $\mathcal{M}\circ \pi_1$ is the identity up to diffeomorphism of relatively trisected $4$--manifolds.
\end{theorem*}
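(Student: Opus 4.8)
The plan is to follow the template of Abrams--Gay--Kirby in the closed case, building the manifold piece by piece from the group-theoretic data and invoking a uniqueness statement at each stage to control well-definedness. The construction of $\mathcal{M}$ proceeds in order of dimension. First I would reconstruct the central surface $\Sigma$ from the relative group trisection: its fundamental group, together with the prescribed boundary data, is part of the structure, and a compact surface with boundary is determined up to diffeomorphism by its genus and its number of boundary components, so $\Sigma$ is recovered uniquely. Next I would build the three pairwise intersections---the compression bodies of Section~\ref{subsection:rel_trisections}---by appealing to the key lemma, which produces, for each prescribed surjection $\pi_1(\Sigma) \twoheadrightarrow F_i$, a compression body realizing it and guarantees it is unique up to diffeomorphism rel boundary. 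Gluing these three compression bodies along $\Sigma$ assembles the spine of the trisection and, in particular, determines the boundary $3$--manifold of each $4$--dimensional piece.

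The final and most delicate step in defining $\mathcal{M}$ is filling in the three $4$--dimensional $1$--handlebodies $X_1, X_2, X_3$. Each $X_i$ has free fundamental group, and its relevant boundary component is a connected sum $\#^{k}(S^1 \times S^2)$. Here I would invoke the Laudenbach--Po\'enaru theorem: every self-diffeomorphism of $\#^{k}(S^1 \times S^2)$ extends over the $4$--dimensional handlebody $\natural^{k}(S^1 \times B^3)$, so the filling is unique up to diffeomorphism. This rigidity is exactly what makes $\mathcal{M}$ well defined on isomorphism classes: an isomorphism of relative group trisections induces compatible diffeomorphisms of the central surfaces and of the compression bodies, which then extend over the $4$--dimensional pieces to a diffeomorphism of the assembled manifolds.

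For the two identities, $\pi_1 \circ \mathcal{M} = \id$ is essentially formal. By construction the fundamental group of each piece of $\mathcal{M}(\mathcal{G})$ is the corresponding free group of $\mathcal{G}$, and the maps induced by the inclusions recover the prescribed surjections, so applying $\pi_1$ returns $\mathcal{G}$ up to isomorphism of relatively trisected groups. The substance lies in $\mathcal{M} \circ \pi_1 = \id$: starting from a relatively trisected manifold $X$, I would show that $\pi_1(X)$ records enough data to rebuild $X$. The central surface is recovered as above; the compression bodies are recovered by the uniqueness clause of the key lemma applied to the surjections induced by the inclusions in $X$ itself; and the $4$--dimensional pieces are recovered by Laudenbach--Po\'enaru. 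Chaining these uniqueness statements yields a diffeomorphism $\mathcal{M}(\pi_1(X)) \cong X$ that is compatible with the trisection structure.

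The main obstacle I anticipate is the boundary bookkeeping intrinsic to the relative setting. Unlike the closed case, the central surface has boundary, the pairwise pieces are compression bodies rather than handlebodies, and the entire trisection must be compatible with the induced open-book structure on $\partial X$. Ensuring that the reconstructions at each stage glue consistently along these boundaries---and that the key lemma's uniqueness is genuinely \emph{rel} boundary, so that the extensions over the $4$--dimensional pieces respect the open book---is where the argument demands the most care. In particular, I expect the precise formulation and proof of the key lemma, the generalized handlebody-extension result, to carry the technical weight, with the remainder of the theorem following by assembling its uniqueness conclusions together with Laudenbach--Po\'enaru.
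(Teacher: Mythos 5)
Your skeleton matches the paper's construction (realize the compression bodies via the key lemma, assemble the spine, fill the $4$--dimensional pieces, and use Laudenbach--Po\'enaru for uniqueness of the filling), but two concrete steps are missing, and one of your assertions is false as stated.

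First, your claim that gluing the three compression bodies along $\Sigma$ ``determines the boundary $3$--manifold of each $4$--dimensional piece'' is not correct: $W_i \cup_{\partial^-} W_j$ is a $3$--manifold \emph{with boundary}, so it is not yet a closed $3$--manifold that could bound $\natural^k S^1\times B^3$. The paper must first attach $(P\times I)\cup(\partial P \times D)$ --- the page and binding pieces of the eventual open book on $\partial X$ --- to each $W_i \cup_{\partial^-} W_j$, and the well-definedness of this step is not formal: it rests on Corollary 14 of \cite{cgp}, which guarantees this attachment is unique up to isotopy rel boundary. You flag exactly this ``boundary bookkeeping'' as the delicate point, but flagging it is not resolving it; without that uniqueness statement, neither the map $\mathcal{M}$ nor the induced open book structure on $\partial X$ is well defined.

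Second, you assert that the relevant boundary component of each $X_i$ ``is a connected sum $\#^{k}(S^1\times S^2)$,'' but in this construction that is a conclusion requiring proof, not given data. What the algebra supplies --- via the pushout condition in the definition of a relative group trisection, together with the condition $w_j \mapsto \omega_j$ (which governs how the attached copies of $P \times I$ act as $1$--handles joining the $\partial^+W_i$) --- is only that each closed boundary component $M^{ij}$ has free fundamental group of rank $k$. Upgrading ``free $\pi_1$'' to ``diffeomorphic to $\#^k S^1\times S^2$'' is where the paper invokes Stallings' theorem \cite{Stallings} (a surjection of $\pi_1$ of a $3$--manifold onto a free product splits the manifold as a connected sum), the $3$--dimensional Poincar\'e Conjecture (to rule out simply connected summands), and the fact that $S^1\times S^2$ is the only closed prime $3$--manifold with infinite cyclic fundamental group. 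Only after this identification does Laudenbach--Po\'enaru \cite{LP} apply to give the unique filling. Your proposal skips this entire chain, so the construction of $\mathcal{M}$ --- and hence both composite identities --- is incomplete as written.
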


In principle, a trisection encapsulates all of the smooth data of a smooth $4$--manifold. Thus, group trisections provide an alternative algebraic approach to investigating smooth $4$--manifolds and trisections. This is seen in \cite{grouptri}, where the authors establish that the Smooth $4$--dimensional Poincar\'{e} Conjecture is equivalent to the statement:  ‘‘Every $(3k,k)$--group trisection of the trivial group is equivalent to the $(0,0)$--trisection of the trivial group.''
Using this translation of one of the most fundamental open questions in low-dimensional topology, one may view relative group trisections and Theorem~\ref{thm:equiv_relative} as a first step towards a similar group theoretic understanding of smooth structures on the $4$--ball and the Smooth $4$--dimensional Sch\"{o}enflies Conjecture.

Using the functor $\pi_1$ to obtain a purely algebraic framework has also been implemented in the setting of knotted surfaces in closed $4$--manifolds via bridge trisections in \cite{grouptrisurf}. One of the main results of loc. cit.  is the bijection between the \emph{set} $\mathtt{Man}^{(4,2)}/\sim$ of bridge trisections of surfaces in $4$--manifolds up to the appropriate notion of equivalence, and the \emph{set} $\mathtt{Alg}^{(4,2)}/\sim$ of triples of maps called \emph{bounding homomorphisms}, again up to the appropriate notion of equivalence. By applying $\pi_1$ to the exterior of a surface $S$ in bridge position with respect to a closed trisection $X^4$, the authors obtain a commutative cube which satisfies the conditions of a relative group trisection corresponding to a relative trisection of the exterior of $S$. This exhibits how relative group trisections agree with and extend the existing algebraic framework of group trisections.

We continue to flesh out the theory by providing a categorical enrichment of the above theorem, extending work of Klug \cite{klugtri}. Let $\mathsf{Tris4Man}^{\partial}$ denote the category of relatively trisected 4-manifolds with boundary and smooth maps preserving the relative trisections; let $\mathsf{3SplitHom}$ denote the category of relative 3-splitting homomorphisms (see Subsection~\ref{subsection:cat_4man}). The data of a relative 3-splitting homomorphism is equivalent to a relative group trisection.

\begin{theorem*}[see \ref{thm:equiv_relative}]
There is a functor $\mathscr M_4^{\partial}:  \mathsf{3SplHom}^{\partial} \to \mathsf{Tris4Man}^{\partial}$ which agrees with the above map $\mathcal M$ on objects and which defines a categorical equivalence $ \mathsf{3SplHom}^{\partial}\simeq \mathsf{Tris4Man}^{\partial}$.
\end{theorem*}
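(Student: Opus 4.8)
The plan is to construct $\mathscr M_4^{\partial}$ by specifying its action on objects and morphisms separately, and then to establish the equivalence by exhibiting $\pi_1$ as a quasi-inverse at the level of Hom-sets. On objects I set $\mathscr M_4^{\partial} := \mathcal M$, using the object-level map from Theorem~\ref{thm:set_equiv_relative} together with the equivalence between relative $3$-splitting homomorphisms and relative group trisections noted above. The substance is therefore in defining $\mathscr M_4^{\partial}$ on morphisms: given a morphism of relative $3$-splitting homomorphisms---a compatible system of homomorphisms between the free groups, surface groups, and their common quotients that comprise the two trisection data---I must produce a trisection-preserving smooth map of the associated $4$--manifolds $\mathcal M(-)$. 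First I would realize each component homomorphism piecewise on the handlebodies, compression bodies, and central surfaces assembling $\mathcal M(-)$, invoking the key handlebody/compression-body extension lemma to promote each algebraic map to a smooth map on the corresponding building block, compatibly along the gluing loci.

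The second step is to verify that these piecewise maps glue to a well-defined smooth map of trisected manifolds and that the assignment is functorial. Compatibility along the double and triple intersections is forced by the commutativity built into a morphism of $3$-splitting homomorphisms, while preservation of identities and composites follows from the uniqueness clause of the extension lemma: any two smooth realizations of the same algebraic datum agree up to the ambient equivalence in $\mathsf{Tris4Man}^{\partial}$, so $\mathscr M_4^{\partial}(g \circ f)$ and $\mathscr M_4^{\partial}(g)\circ \mathscr M_4^{\partial}(f)$ realize identical group-theoretic data and hence coincide.

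With the functor in hand I would establish the equivalence via essential surjectivity and full faithfulness. Essential surjectivity is immediate from Theorem~\ref{thm:set_equiv_relative}: every relatively trisected manifold $X$ satisfies $X \cong \mathcal M(\pi_1(X)) = \mathscr M_4^{\partial}(\pi_1(X))$ up to trisection-preserving diffeomorphism. For full faithfulness I would show that $\mathscr M_4^{\partial}$ and $\pi_1$ induce mutually inverse bijections on Hom-sets. Surjectivity on morphisms is exactly the content of the construction above, which realizes every algebraic morphism by a smooth one; given a smooth map $\phi$, the morphism $\pi_1(\phi)$ satisfies $\mathscr M_4^{\partial}(\pi_1(\phi)) \simeq \phi$ by the morphism-level refinement of $\mathcal M \circ \pi_1 = \id$. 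Injectivity is again supplied by the uniqueness part of the extension lemma, guaranteeing that two trisection-preserving smooth maps inducing the same morphism of $3$-splitting homomorphisms are equivalent.

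The main obstacle will be full faithfulness, specifically the injectivity direction---showing that the induced morphism on $3$-splitting homomorphisms determines the smooth map up to the chosen equivalence. The subtlety is that distinct smooth maps can induce the same homomorphism on fundamental groups, so uniqueness must be formulated and applied at the level of the whole trisected structure, not merely on individual pieces. The relative setting compounds this: the compression bodies carry boundary data that must be tracked through the extension lemma, and one must ensure the smooth realizations respect the boundary parametrizations coherently, so that the glued map is genuinely trisection-preserving. This is precisely where the generalization of the classical unique-handlebody-extension fact, advertised as the paper's key lemma, does the essential work.
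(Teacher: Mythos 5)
Your overall skeleton matches the paper's: define $\mathscr M_4^{\partial}$ on objects via $\mathcal M$, realize morphisms of relative $3$-splitting homomorphisms by smooth maps built piecewise over the spine, and prove the equivalence via essential surjectivity (immediate from Theorem~\ref{thm:set_equiv_relative}) plus full faithfulness. But there is a genuine gap at exactly the point you identify as the crux: you assert that functoriality, faithfulness, and fullness all follow from ``the uniqueness clause of the extension lemma,'' i.e.\ Lemma~\ref{lem:compression}. That lemma's uniqueness is an \emph{object-level} statement --- the compression body realizing a given surjection $S_g^b \to C_{g,p}^b$ is unique up to diffeomorphism rel $\partial^-W$ --- and it says nothing about \emph{maps}: it does not imply that two trisection-preserving smooth maps $X^f \to X^g$ inducing the same morphism of $3$-splitting homomorphisms are homotopic, nor that homotopies constructed piecewise on the spine extend over the $4$-dimensional pieces. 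Since distinct smooth maps can certainly induce equal $\pi_1$ data (as you yourself note), citing the extension lemma here is circular: the statement you need is precisely the one you are trying to prove.

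The paper closes this gap with a toolkit your proposal never invokes. Morphisms in $\mathsf{Tris4Man}^{\partial}$ are homotopy classes of trisection- and basepoint-preserving maps, and the paper reduces every map-level question to the boundaries of the codimension-$0$ pieces: by Lemma~\ref{lem:heegaardboundary}, each $\partial X_i \cong \#^k S^1 \times S^2$ carries a Heegaard splitting unique up to isotopy (Waldhausen), so the problem becomes $3$-dimensional, where Klug's equivalence $\mathsf{Spl3Man} \simeq \mathsf{2SplHom}$ already settles both faithfulness and fullness. Then Klug's extension lemmas for $4$-dimensional $1$-handlebodies (Lemmas~\ref{klug2} and~\ref{klug1}) promote maps and homotopies from $\partial X_i$ to $X_i$; this is what actually makes the piecewise construction glue, makes $\mathscr M_4^{\partial}(\psi\circ\varphi)$ homotopic to $\mathscr M_4^{\partial}(\psi)\circ \mathscr M_4^{\partial}(\varphi)$, and turns agreement of the algebraic data into a trisection-preserving homotopy of the realized maps. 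Without some substitute for this boundary-reduction-plus-extension mechanism, your injectivity and fullness arguments do not go through.
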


This establishes a purely algebraic formulation of the notion of a relative trisection, providing a natural context in which to view these topological decompositions.

\subsection*{Organization} In Section~\ref{section:trisections}, we recall the notion of trisections of closed 4-manifolds and groups, and restate the main theorem of \cite{grouptri}. In Section~\ref{section:relative_case}, we treat the case of manifolds with boundary and introduce \emph{relative} trisections of 4-manifolds and groups. In Section~\ref{section:category_trisection}, we discuss the categorical enrichments of Heegaard splittings and trisections developed in \cite{klugtri}. We extend the work of Klug to the relative case, establishing a categorical equivalence between relative trisections of 4-manifolds and groups. We then put forward natural questions and directions for future work in Section~\ref{section:remarks}.

\subsection*{Conventions and Notation}
All manifolds are smooth, compact, connected, and oriented. For a topological space $X$ and a point $x \in X$, we let $\pi_1(X,x)$ denote the fundamental group of homotopy classes of loops in $X$ based at $x$. We often suppress mention of the basepoint, denoting the fundamental group by $\pi_1(X)$. We use $\pi_1$ when we wish to emphasize the fundamental group as a functor on the category of (pointed) topological spaces and (pointed) continuous maps.

\subsection*{Acknowledgements}
The authors would like to thank David Gay and Hannah Schwartz for helpful conversations. The second author would also like to thank Benjamin Ruppik. The second author was partially supported by NSF grants DMS-1664567 and DMS-1745670. Via the third author, this work was supported by a Spring 2023 Fordham Faculty Fellowship and a 2023-2024 Fordham Faculty Research Grant. 
\section{Trisections}
\label{section:trisections}
 
We recall the basic definitions of closed trisected $4$--manifolds \cite{gk}, and relate them to group trisections \cite{grouptri}.
\begin{definition}
Given non-negative integers $g$ and $k$, a \emph{$(g,k)$-trisection} of a closed, oriented, smooth $4$--manifold $X$ is a decomposition $X=X_1\cup X_2 \cup X_3$ such that 
\begin{enumerate}[i)]
	\item $X_i \cong \natural^k S^1\times B^3$,
	\item $\partial X_i = (X_i\cap X_{i-1})\cup (X_i \cap X_{i+1})$ is a genus $g$ Heegaard splitting, where the indices are taken modulo $3$,
	\item $X_1\cap X_2\cap X_3 = \Sigma_g,$ a closed genus $g$ surface.
\end{enumerate}
From this data we obtain that the Euler characteristic of $X$ is $\chi(X)=2+g-3k.$ Thus, specifying the $4$--manifold and the genus of the trisection determines $k$.
\end{definition}

There is a stabilization operation for closed trisected $4$--manifolds whereby we take the connected sum of a given trisection with the genus $3$ trisection of $S^4$. This operation extends to trisected manifolds with boundary. As this operation is performed in the interior of the manifold, we refer to it as an \emph{interior stabilization}.

\begin{theorem}[\hspace{1sp}\cite{gk}]
Every smooth, oriented, closed $4$--manifold admits a $(g,k)$-trisection for some $g$ and $k$. Moreover, any two trisections of a fixed $4$-manifold can be made isotopic after some number of interior stabilizations of each.
\end{theorem}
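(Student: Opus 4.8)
The plan is to prove existence and stable uniqueness separately, in each case encoding a trisection as a generic smooth map $G\colon X \to B^2$ (a \emph{Morse $2$--function}), whose regular fibers are surfaces and whose singular locus consists of folds and cusps. The three pieces $X_1, X_2, X_3$ arise as preimages of the three sectors obtained by cutting $B^2$ into thirds from its center, the central surface $\Sigma_g$ as the preimage of the central point, and the three Heegaard handlebodies as preimages of the three radial arcs; the preimage of $\partial B^2$ is a surface bundle over the circle that supplies the collars gluing adjacent pieces, so that the $X_i$ reassemble the closed manifold $X$.

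For \textbf{existence}, I would first produce a self-indexing Morse function $f\colon X \to [0,4]$ with a single index-$0$ and a single index-$4$ critical point, which is possible since $X$ is closed and connected. The union of the $0$-- and $1$--handles is a $4$--dimensional $1$--handlebody $\natural^{k_1} S^1\times B^3$, and dually the $3$-- and $4$--handles assemble into another such handlebody. The difficulty is the middle: a collection of $2$--handles is not a priori a $1$--handlebody. To remedy this, I would promote $f$ to a Morse $2$--function $G$ and apply the fold-manipulation technology (births of cancelling fold pairs, flips, and merges; crucially, the elimination of \emph{definite} folds so that all fibers stay connected) to bring the critical image into standard concentric form. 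The key local claim, which I expect to be the technical heart of the argument, is that after these moves, crossing each indefinite fold corresponds to a single handle attachment in the fiber direction, so that the preimage of each sector becomes diffeomorphic to the \emph{same} $\natural^k S^1\times B^3$ and each pairwise intersection is a genus-$g$ Heegaard splitting. Condition (iii) holds by construction, and the relation $\chi(X)=2+g-3k$ follows from a cell count of the induced CW structure.

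For \textbf{stable uniqueness}, suppose two trisections are given, encoded by Morse $2$--functions $G_0$ and $G_1$. Since the target $B^2$ is connected and generic maps form an open dense subset of the mapping space, $G_0$ and $G_1$ are joined by a generic one-parameter family $G_t$ meeting only codimension-$1$ degeneracies (births and deaths of fold pairs, cusp crossings, swallowtails, and isotopies of the critical image). I would analyze each such event and show that it either leaves the associated trisection unchanged up to diffeomorphism or realizes an \emph{interior stabilization}, i.e.\ a connected sum with the genus-$3$ trisection of $S^4$. Accumulating these local contributions shows that $G_0$ and $G_1$ determine trisections that become isotopic after interior stabilizations of each, exactly as claimed; this is the trisection analogue of the Cerf-theoretic proof of the Reidemeister--Singer theorem for Heegaard splittings.

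The main obstacle in both halves is the same: maintaining control of the singular locus. In the existence argument one must rule out definite folds and guarantee connected fibers throughout, so that the reconstruction genuinely yields diffeomorphic $1$--handlebodies rather than more complicated pieces; in the uniqueness argument one must enumerate the codimension-$1$ strata crossed by a generic homotopy and match each to a topological move on trisections. I expect the bookkeeping of these moves---verifying that the list is complete and that each event is absorbed by stabilization---to be the most delicate part, mirroring the subtlety of Cerf theory.
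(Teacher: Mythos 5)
This theorem is not proved in the paper at all: it is imported from Gay--Kirby \cite{gk}, so the only meaningful comparison is with their proof, and your outline does correctly reproduce its architecture (existence via Morse $2$--functions brought into a standard concentric form, uniqueness via Cerf theory for one-parameter families; \cite{gk} also gives a second, more elementary existence proof that you start down and then abandon, in which the attaching link of the $2$--handles is isotoped into surface-framed position on a Heegaard surface of the middle level). However, as a proof your proposal has genuine gaps. First, the ``fold-manipulation technology'' you invoke --- that definite folds can be removed and fibers kept connected, both for a single map and for a homotopy between two maps relative to its endpoints --- is precisely the main theorem of Gay and Kirby's earlier paper on indefinite Morse $2$--functions; it is not a genericity statement, and citing it as a black box means your existence argument reduces the theorem to another theorem rather than proving it. Even granting it, what remains is not routine: one must order the critical image into concentric cusped circles, verify that each sector preimage is a $4$--dimensional $1$--handlebody, and then \emph{balance} the three pieces, since the construction a priori yields three different values $k_1,k_2,k_3$. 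Second, a factual error: for closed $X$, the preimage of $\partial B^2$ cannot be a surface bundle over the circle ``supplying collars''; every point of $\partial B^2$ is necessarily a critical value, the boundary circle is the image of a one-dimensional (definite) fold locus, and the fibers over it are points, so the gluing of the pieces is not organized the way you describe.

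The more serious gap is in uniqueness. A generic homotopy $G_t$ between the two Morse $2$--functions passes through maps that are not trisected in any sense: definite folds appear, fibers disconnect, and the critical image is nowhere near concentric form. Consequently there is no trisection attached to intermediate times, nothing to ``track,'' and no way to ``accumulate local contributions.'' The codimension-one events of a generic family (births, swallowtails, cusp merges, fold crossings) do not individually correspond either to diffeomorphisms of trisections or to interior stabilizations; that dictionary only exists after the entire family has been made indefinite and fiber-connected (the one-parameter, relative form of the Gay--Kirby theorem above) and then globally reordered, and after one proves that the resulting moves generate exactly the equivalence given by connected sum with the genus-$3$ trisection of $S^4$. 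That analysis is the actual content of the uniqueness half of \cite{gk}; labeling it ``bookkeeping'' makes your proposal an accurate description of the strategy, but not a proof of the theorem.
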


Given non-negative integers $g$ and $k$, let us fix the following group presentations:
\begin{itemize}
	\item $\displaystyle{S_g = \left\langle x_1, y_1, \ldots, x_g,y_g \; | \; \prod_{i=1}^g [x_i, y_i] = 1 \right\rangle}$,
	\item $\displaystyle{H_g =\left\langle \delta_1,\ldots,\delta_g\right\rangle}$ is the free group of rank $g$, 
	\item $Z_k=\left\langle z_1, \ldots, z_k \right\rangle$ is the free group of rank $k$.
\end{itemize}

\begin{definition}\label{defn:group_trisection}
Let $G$ be a finitely presented group. A $(g,k)$-\emph{trisection} of $G$ is a commutative cube of group homomorphisms
\begin{center}
\begin{tikzcd}[row sep=scriptsize, column sep=scriptsize]
    & H_g \arrow[r] \arrow[dr] &   Z_k \arrow[dr]  &\\
    S_g \arrow[r]\arrow[dr]\arrow[ur]  &   H_g \arrow[ur, crossing over] \arrow[dr]   &   Z_k \arrow[r]  &   G\\
    &   H_g \arrow[r] \arrow[ur] &   Z_k \arrow[ur] \arrow[ul, leftarrow, crossing over] &
\end{tikzcd}
\end{center}
where each map is surjective and each face is a push-out.
\end{definition}

Given a $(g,k)$-trisection of a closed $4$--manifold $X=X_1 \cup X_2\cup X_3$, applying the fundamental group to each $X_i$, their pairwise intersections, and triple intersection yields a $(g,k)$-trisection of $\pi_1(X)$. Thus, the functoriality of $\pi_1$ defines a map from the set of pointed, parametrized, closed, trisected $4$--manifolds to the set of trisected groups. Perhaps surprisingly, this is a one-to-one correspondence. 

\begin{theorem}[\hspace{1sp}\cite{grouptri}]\label{thm:man2grp}
There is a map $\mathcal{M}$ from the set of group trisections to the set of pointed, parametrized, closed, trisected $4$--manifolds. Moreover, $\pi_1 \circ\mathcal{M}$ is the identity up to isomorphism of trisected groups and $\mathcal{M}\circ \pi_1$ is the identity up to diffeomorphism of trisected $4$--manifolds.
\end{theorem}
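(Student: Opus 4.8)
The plan is to construct $\mathcal{M}$ by reversing the passage from a geometric trisection to its fundamental-group cube, assembling the $4$--manifold one dimension at a time from the algebraic data. The guiding principle is that at each stage the relevant topological piece is pinned down, up to the appropriate equivalence, by its fundamental group together with the maps in the cube; the ``parametrized'' hypothesis is then automatic, since every piece is built from the fixed standard models $S_g$, $H_g$, $Z_k$.

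First I would fix a genus $g$ surface $\Sigma_g$ realizing the standard presentation $\pi_1(\Sigma_g) = S_g$. For each of the three edges $S_g \to H_g$, I would invoke the classical fact cited in the abstract: a surjection of a surface group onto the rank-$g$ free group arising in this way is realized by an essentially unique $3$--dimensional genus $g$ handlebody $H$ with $\partial H = \Sigma_g$ whose inclusion $\Sigma_g \hookrightarrow H$ induces the given map. This attaches three handlebodies $H^{(1)}, H^{(2)}, H^{(3)}$ along the common boundary $\Sigma_g$, and the three pairwise unions $H^{(i)} \cup_{\Sigma_g} H^{(j)}$ are closed $3$--manifolds presented as genus $g$ Heegaard splittings.

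The crux is to identify these $3$--manifolds and then to cap them off in dimension four. Van Kampen's theorem computes $\pi_1\!\left(H^{(i)} \cup_{\Sigma_g} H^{(j)}\right)$ as the pushout $H_g *_{S_g} H_g$, which the pushout condition on the corresponding face of the cube forces to equal the free group $Z_k$ of rank $k$. A genus $g$ Heegaard splitting with free fundamental group of rank $k$ is a splitting of $\#^k(S^1 \times S^2)$, and by Waldhausen's uniqueness theorem it is the standard (stabilized) splitting. I would then appeal to the theorem of Laudenbach--Po\'enaru: $\#^k(S^1 \times S^2)$ bounds the $1$--handlebody $\natural^k(S^1 \times B^3)$ uniquely up to diffeomorphism, because every self-diffeomorphism of $\#^k(S^1 \times S^2)$ extends over the handlebody. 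Capping each of the three boundary $3$--manifolds with a copy of $\natural^k(S^1 \times B^3)$ produces the three $4$--dimensional pieces $X_1, X_2, X_3$; gluing them along their shared $3$-- and $2$--dimensional strata yields a closed trisected $4$--manifold $X := \mathcal{M}(\,\cdot\,)$, and the remaining pushout faces together with van Kampen give $\pi_1(X) = G$.

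For well-definedness and the two identities, I would argue that each choice above is unique up to the relevant diffeomorphism and that these uniqueness statements are mutually compatible. The identity $\pi_1 \circ \mathcal{M} = \mathrm{id}$ is then read off face-by-face from the van Kampen computations used in the construction. For $\mathcal{M} \circ \pi_1 = \mathrm{id}$, starting from a geometric trisection $X = X_1 \cup X_2 \cup X_3$ and running the construction on its $\pi_1$-cube, uniqueness of handlebody extensions recovers the same $\Sigma_g$ and the same three $3$--handlebodies, Laudenbach--Po\'enaru recovers the same $4$--dimensional pieces, and hence the reassembly is diffeomorphic to $X$. I expect the main obstacle to lie not in the algebra but in the rigidity inputs from $3$-- and $4$--manifold topology: Waldhausen uniqueness of Heegaard splittings and, especially, the Laudenbach--Po\'enaru extension theorem, which is precisely what guarantees that the top-dimensional pieces are determined by their fundamental groups.
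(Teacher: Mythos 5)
Your proposal is correct and takes essentially the same route as the paper: the paper cites Abrams--Gay--Kirby for this closed-case statement, and its own proof of the relative analogue (Theorem~\ref{thm:set_equiv_relative}) runs exactly your way --- realize each surjection $S_g \to H_g$ by an essentially unique handlebody extension of $\Sigma_g$, identify each pairwise union as $\#^k(S^1\times S^2)$ with its standard (Waldhausen-unique) splitting, and cap uniquely via Laudenbach--Po\'enaru. The one step you assert rather than argue --- that a closed $3$--manifold with free fundamental group of rank $k$ is $\#^k(S^1\times S^2)$ --- is precisely where the paper invokes Stallings' splitting theorem together with the $3$--dimensional Poincar\'e conjecture, so that deep input belongs on your list of rigidity ingredients alongside Waldhausen and Laudenbach--Po\'enaru.
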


Subsequent work of Klug establishes an equivalence of categories which induces the above bijection. We revisit this work in Section~\ref{subsection:cat_4man}.

\section{Relative group trisections and $4$--manifolds with boundary}
\label{section:relative_case}

We consider trisections of compact $4$--manifolds relative to a connected boundary. The main result of this section is Theorem~\ref{thm:set_equiv_relative}, which relates relative trisections of 4-manifolds with boundary and relative group trisections. This generalizes Theorem~\ref{thm:man2grp} to the relative case. We finish the section by recalling results of Klug which will be useful in establishing a categorical enrichment of our main result.

\subsection{Relative trisections}
\label{subsection:rel_trisections}

As mentioned in the introduction, results in this section extend to all compact $4$--manifolds, but we restrict our attention to manifolds with non-empty, connected boundary for the sake of simplicity. The codimension $0$ pieces of a trisected manifold, with or without boundary, are $4$--dimensional $1$--handlebodies. The key distinction then is in how the handlebodies intersect. 

We begin by recalling the structure of the pairwise intersections which are specific types of relative compression bodies, a schematic of which is shown in Figure~\ref{fig:CSchematic}. First, obtain a genus $g$ surface with boundary $\Sigma_g^b$ from $S_g^b$ which is unique up to diffeomorphism. We construct a compression body $W_{g,p}^b$ by attaching $3$--dimensional $2$--handles to $\Sigma_g^b \times I$ along curves in $\Sigma_g^b \times \{1\}$, so that $W_{g,p}^b$ is a cobordism with sides from $\Sigma_g^b$ to a connected genus $p$ surface with $b$ boundary components, $P$. We will decompose $\partial W_{g,p}^b = \partial^-W \cup \partial^0W\cup \partial ^+W$, where $\partial^-W \cong \Sigma_g^b$, $\partial^+W \cong P$, and $\partial^0W\cong \partial\Sigma_g^b \times I$. However, we do exclude the setting when $p>b=0$.

\begin{definition}\label{defn:relative_trisection}
Let $g,k,p, b$ be non-negative integers with $g\geq k \geq0$, $p\geq0$, and $b>0$. A \emph{$(g,k;p,b)$-trisection} of a $4$-manifold $X$ with non-empty, connected boundary is a decomposition, $X = X_1 \cup X_2\cup X_3$, such that
	\begin{enumerate}[i)]
		\item $X_i \cong \natural ^k S^1\times B^3$
		\item $X_i \cap X_{i\pm 1} \cong W_{g,p}^b$
		\item $X_1\cap X_2 \cap X_3 = \Sigma_g^b$ is a genus $g$ surface with $b$ boundary components.
	\end{enumerate}
Implicit in this definition is $X_i \cap \partial X\cong P$ for each $i$. We also refer to such decompositions simply as \emph{relative trisections} if $g,k,p,b$ do not need to be explicitly identified. 
\end{definition}

\begin{remark}[Empty Boundary]\label{rem:empty}
It is possible to allow $b = 0$ above, in which case we must have $p = 0$. We may extend Definition~\ref{defn:relative_trisection} to include the case when $p=b=0$, which yields a trisected $4$--manifold relative to an \emph{empty boundary}. We then identify a $(g,k;0,0)$--trisection with a $(g,k)$--trisection. Indeed, a compression body $W_{g,0}^0$ is a cobordism from a closed genus $g$ surface to a closed genus $0$ surface. Filling in the sphere boundary component yields the standard construction of a genus $g$ handlebody, each pair of which gives a Heegaard decomposition of one of the $\partial X_i$. 
\end{remark}

{\centering
\begin{figure}
	\labellist                             
	    \small\hair 2pt
		\pinlabel $\color{red}\vdots$ at 110 112
		\pinlabel $\Sigma_g^b$ at 110 15
		\pinlabel $P$ at 110 195
		\pinlabel \resizebox{6pt}{.9in}{\mbox{$\{$}} at 20 108
		\pinlabel \resizebox{6pt}{.9in}{\mbox{$\}$}} at 198 108
		\pinlabel \mbox{$(\partial\Sigma_g^b \times I)\cong \partial^0 W$} at -50 108
		\pinlabel \mbox{$\partial^0 W\cong (\partial\Sigma_g^b \times I)$} at 280 108
	\endlabellist
	\includegraphics[scale=.6]{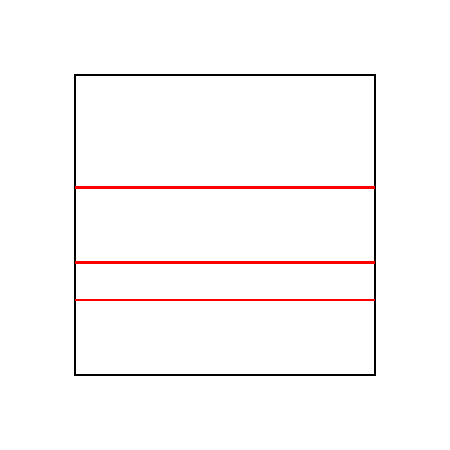}
\caption[width=textwidth]{Pictured is a schematic for the compression body $W$ viewed as a cobordism from $\Sigma_g^b$ (bottom) to $P$,a genus $p$ surface with $b$ boundary components (top). The red horizontal lines indicate critical levels (corresponding to index $2$ critical points) of a Morse function $f: W \to [0,1]$, where $f^{-1}(0)=\Sigma_g^b$ and $f^{-1}(1)= P.$ The vertical sides of the schematic comprise $\partial^0 W\cong \partial\Sigma_g^b \times I.$\label{fig:CSchematic}}
\end{figure}}

Given non-negative integers $g,k,p,b$ as in Def.~\ref{defn:relative_trisection}, let us fix the following group presentations:
\begin{itemize}
	\item $\displaystyle{S_g^b = \left\langle x_1, y_1, \dots, x_g,y_g, w_1,\dots,w_b \; | \; \prod_{i=1}^g [x_i, y_i] = \prod_{j=1}^b w_j \right\rangle}$,
	\item $\displaystyle{C_{g,p}^b=\left\langle \delta_1,\dots,\delta_{g-p}, \zeta_1,\eta_1,\dots,\zeta_p,\eta_p, \omega_1,\dots,\omega_b \; | \; \prod_{i=1}^p [\zeta_i,\eta_i]= \prod_{j=1}^b \omega_i \right\rangle}$ is the free group of rank $n=g+p + b-1$, \\
	\item $Z_k$ is the free group of rank $k$.
\end{itemize}

\begin{definition}\label{def:relgroup}
Let $G$ be a finitely presented group. A \emph{$(g,k;p,b)$-trisection} of $G$ is a commutative cube of group homomorphisms
\begin{center}
\begin{tikzcd}[row sep=scriptsize, column sep=scriptsize]
    & C_{g,p}^b \arrow[r] \arrow[dr] &   Z_k \arrow[dr]  &\\
    S_g^b \arrow[r]\arrow[dr]\arrow[ur]  &   C_{g,p}^b \arrow[ur, crossing over] \arrow[dr]   &   Z_k \arrow[r]  &   G\\
    &   C_{g,p}^b \arrow[r] \arrow[ur] &   Z_k \arrow[ur] \arrow[ul, leftarrow, crossing over] &
\end{tikzcd}
\end{center}
such that
\begin{enumerate}[i)]
	\item each map is surjective,
	\item each face of the cube is a pushout,
	\item for each map $S_g^b \to C_{g,p}^b$ and each $j\in\{1,\ldots, b\}$, we have $w_j \mapsto \omega_j$.

\end{enumerate}
This will be referred to as a \emph{relative group trisection} of $G$.
\end{definition}

\begin{remark}[Relation to group trisections]\label{rem:grouptri}
Just as in the manifold setting, we can recover the definition of a $(g,k)$-group trisection defined in \cite{grouptri} by allowing for $(g,k;0,0)$--group trisections in Definition~\ref{def:relgroup}.  In this case, $S_{g,0}^0$ does not contain the elements $w_i$ in the generating set and the relation becomes $\prod[x_i,y_i]=1$. That is, $S_{g,0}^0$ is a closed surface group. Additionally, $C_{g,0}^0$ is a free group of rank $g$. In this case, the third condition in the definition is vacuous, and we recover the notion of a $(g,k)$--group trisection.
\end{remark}

Given a $(g,k;p,b)$-trisection of $G$, we construct a trisected $4$--manifold $X$ with boundary unique up to diffeomorphism with $\pi_1(X)=G.$ Our argument is similar to that given in \cite{grouptri}, with key differences in constructing the compression bodies $X_i\cap X_j$ and in the construction of $\partial X$. We have the following lemma, which generalizes the classical fact that there is a unique handlebody inclusion map realizing any surjection from a closed surface group to a free group of half the rank. This proof follows the argument given in \cite{LR}, but more care is required to ensure that the correct diffeomorphism type of compression body is created.

\begin{lemma}\label{lem:compression}
Given a surjection $h:S_g^b \rightarrow C_{g,p}^b$ as in Definition~\ref{def:relgroup}, there is a compression body $W=W_{g,p}^b$, unique up to diffeomorphism relative to $\partial^-W = \Sigma_g^b=\Sigma$, that induces $h$ on the fundamental group.
\end{lemma}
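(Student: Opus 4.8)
The plan is to realize the surjection $h:S_g^b \to C_{g,p}^b$ by an explicit compression body obtained from handle attachments, following the strategy of \cite{LR} but tracking the boundary data carefully. First I would set up the algebra: since $S_g^b$ is the fundamental group of the surface $\Sigma_g^b$ and $C_{g,p}^b$ is a free group of rank $n = g+p+b-1$, the surjection $h$ has a kernel that is normally generated by some collection of simple closed curves on $\Sigma_g^b$. The key algebraic observation is that the rank drops from $2g+b-1$ (the rank of $H_1(\Sigma_g^b)$, or rather the minimal number of generators of $S_g^b$) to the rank of $C_{g,p}^b$, and condition (iii) forces the boundary curves $w_j$ to map to the distinguished generators $\omega_j$; this pins down the behavior of $h$ on $\partial\Sigma_g^b$ and ensures the compression occurs only in the interior.

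Next I would construct $W$ topologically. The recipe is to take $\Sigma_g^b \times I$ and attach $3$--dimensional $2$--handles along a system of disjoint simple closed curves in $\Sigma_g^b \times \{1\}$ representing a suitable subset of generators of $\ker h$. The number of such curves must be exactly $g-p$, so that the top boundary component $\partial^+W = P$ is a genus $p$ surface with $b$ boundary components, matching the definition of $W_{g,p}^b$. Here I would invoke the standard fact that attaching a $2$--handle along a simple closed curve $\gamma$ realizes, at the level of $\pi_1$, the quotient by the normal closure of $[\gamma]$; doing this along a curve system dual to a chosen half of the compressible generators yields a compression body inducing $h$. I need to verify that the curves can be chosen disjoint and nonseparating in the appropriate sense, and that the resulting $P$ is connected with the correct genus and boundary count — this is where the constraint $g \geq p$ and the excluded case $p > b = 0$ become relevant.

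For the uniqueness statement, I would argue in two stages. First, any compression body inducing $h$ rel $\partial^-W = \Sigma$ has its diffeomorphism type determined by the isotopy class of the curve system along which the $2$--handles are attached (this is the classical handlebody uniqueness input, generalized to compression bodies). Second, I would show that any two curve systems realizing the same surjection $h$ are related by handle slides and isotopies, hence produce diffeomorphic compression bodies rel boundary; this uses that a surjection of a surface group onto a free group determines its kernel's normal generators up to the mapping class group action fixing the boundary. The relative nature — fixing $\Sigma$ pointwise, or at least $\partial^- W$ and the boundary structure $\partial^0 W \cong \partial\Sigma_g^b \times I$ — must be maintained throughout.

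The hard part will be the uniqueness argument in the presence of boundary. In the closed handlebody case one can appeal directly to results of \cite{LR}, but the compression body here is a genuine cobordism with sides $\partial^0 W$, and one must ensure both that the construction produces precisely the diffeomorphism type $W_{g,p}^b$ (not some other compression body inducing the same map) and that the uniqueness holds \emph{relative} to the full boundary structure, not merely up to abstract diffeomorphism. I expect the main technical obstacle to be showing that two systems of attaching curves inducing the same $h$ differ by an ambient isotopy and handle slides \emph{compatible with the boundary}, so that the diffeomorphism between the two resulting compression bodies restricts to the identity on $\Sigma = \partial^-W$; condition (iii) of Definition~\ref{def:relgroup} should be exactly the constraint that makes this boundary-compatibility go through.
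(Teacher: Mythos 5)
Your proposal has the right overall shape (attach $g-p$ three-dimensional $2$--handles along disjoint simple closed curves in $\ker h$, check that $\partial^+W$ is connected of genus $p$ with $b$ boundary components, then prove uniqueness), but it is missing the two arguments that actually carry the paper's proof. First, the existence of the curve system is asserted rather than constructed: you say the kernel is ``normally generated by some collection of simple closed curves'' and that one attaches handles ``along a curve system dual to a chosen half of the compressible generators,'' but an arbitrary surjection $h$ need not be in any standard form, and producing disjoint \emph{simple} closed curves in $\ker h$ is precisely the technical heart of the lemma. The paper does this by choosing a map $f\colon\Sigma\to\vee^{n}S^1$ (with $n=g+p+b-1$, omitting the redundant generator $\omega_b$) inducing $h$, pulling back regular points on the $\delta_i$--circles to get $1$--submanifolds $\gamma_i$, and band-summing components to obtain simple closed curves $\Gamma_i$ still in $\ker f_*$; homological linear independence of the $[\Gamma_i]$ in $H_1(\Sigma,\partial\Sigma;\Z)$ then gives connectedness of $\partial^+W$, and the fact that $h(w_j)=\omega_j$ rules out boundary-parallel attaching circles. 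Your proposal flags these verifications as needed but supplies no mechanism for them. Second, even granting the curve system, you never verify that the inclusion $\Sigma\hookrightarrow W$ induces exactly $h$ (rather than merely a surjection with the same kernel normal generators); the paper handles this by extending $f$ to $\tilde f\colon W\to\vee^nS^1$, noting $\tilde f_*$ is a surjection between free groups of the same finite rank, and invoking the Hopfian property to conclude it is an isomorphism.

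The uniqueness step is where your route genuinely diverges, and where it breaks. You propose to show that any two curve systems realizing $h$ differ by handle slides and isotopies compatible with the boundary, resting on the claim that ``a surjection of a surface group onto a free group determines its kernel's normal generators up to the mapping class group action fixing the boundary.'' That claim is not a citable standard fact; it is essentially a restatement of the uniqueness assertion you are trying to prove, so as written the argument is circular. The paper instead disposes of uniqueness by Dehn's Lemma (as in the classical closed-handlebody case): a curve compressed in one extension is null-homotopic, hence bounds an embedded disk, in any other extension inducing the same map, and from this one builds the diffeomorphism rel $\partial^-W=\Sigma$. If you want to pursue the handle-slide route, you would need to prove your mapping-class-group claim independently, which is a harder statement than the lemma itself.
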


\begin{proof}

Let $X_n=\vee^nS^1$ where $n=g+p+b-1$ and identify the oriented generators of $\pi_1(X_n)$ with all but one of the elements in our generating set for $C_{g,p}^b$. Since we can write any $\omega_j$ in terms of the other generators, we make the convention to exclude $\omega_b$ from our wedge of circles. This gives an isomorphism $\pi_1(X_n)\to C_{g,p}^b$. We also fix an isomorphism $\phi: S_g^b \to \pi_1(\Sigma)$ that maps the elements $w_1,\dots,w_b$ of $S_g^b$ onto fundamental group elements corresponding to the $b$ boundary components of $\Sigma$. Let $f:\Sigma \rightarrow X_n$ be a continuous map that induces $h$ on $\pi_1$, i.e.\ so that $f_*\circ \phi=h$.

For each circle corresponding to a $\delta_i$ generator of $C_{g,p}^b$, perturb the map to be transverse to a point $p_i$ in that circle that is not the wedge point. Let $\gamma_i=f^{-1}(p_i)$, and note that $[\gamma_1],\dots,[\gamma_{g-p}]$ are linearly independent in $H_1\left(\Sigma,\partial \Sigma;\mathbb{Z}\right)$. Each $\gamma_i$ is a 1-submanifold of $\Sigma$, but by band-summing the components of $\gamma_i$ if necessary, we obtain a single, simple closed curve $\Gamma_i$, still in the kernel of $f_*$, and representing the same homology class. Let $\Gamma=\left\{\Gamma_1,\dots,\Gamma_{g-p}\right\}$. We then construct $W$ as above by attaching 3-dimensional 2-handles along each component of $\Gamma \times \{1\}$ to $\Sigma \times I$.
    
We claim that $\partial^+ W$ is a connected, genus $p$ surface with $b$ boundary components. First, note that $\partial^+ W$ must have $b$ boundary components since $h(w_i)=\omega_i$ for $1\leq i \leq b$ and thus none of the attaching circles $\Gamma_i$ are boundary parallel. So $\partial^0 W\cong\coprod^b S^1 \times I$. To see that $\partial^+W$ is connected, we use the fact that $[\Gamma_1], \ldots, [\Gamma_{g-p}]$ are linearly independent. Therefore no subset of $\Gamma$ can separate, for then their homology classes would sum to the trivial class. This proves the claim, thus $W=W_{g,p}^b$ is of the desired diffeomorphism type and has fundamental group isomorphic to $C_{g,p}^b$.

We now show that the inclusion map $i: \Sigma \hookrightarrow W$ induces $h$ on $\pi_1$. We can extend $f: \Sigma \to X_n$ to $\tilde{f}: W \to X_n$ since $\ker(i_*) \subset \ker(f_*)$ by construction. Thus, the composition $\tilde{f}\circ i: \Sigma \to W \to X_n$ is equal to $f: \Sigma \to X_n$. Note that $f_*=\tilde{f}_*\circ i_*$ implies that $\tilde{f}_*$ is surjective, and since $\pi_1(W)\cong \pi_1(X_n)$ are free, and therefore Hopfian, this surjection is an isomorphism. Thus, $\pi_1(W)$ is isomorphic to $C_{g,p}^b$ via $\tilde{f}_*$. Consequently, the composition 
        $$S_g^b \xrightarrow{\phi} \pi_1 (\Sigma) \xrightarrow{i_*} \pi_1 (W) \xrightarrow{\tilde{f}_*} C_{g,p}^b$$
 is equal to $h:S_g^b \to C_{g,p}^b$.
 
 Finally, this construction is unique up to diffeomorphism relative to $\partial^-W=\Sigma$ by Dehn's Lemma. 
\end{proof}

\begin{remark}\label{rmk:multipleboundary}
Although the proof presented above will always produce a relatively trisected 4-manifold with connected boundary, it is possible to allow for any finite number of boundary components by extending Lemma~\ref{lem:compression} as follows. To produce a compression body with $m > 1$ boundary components realizing a given surjection, we may take the initial map from $\Sigma$ to a graph constructed by starting with a path graph (i.e., a graph homeomorphic to an interval) with $m$ vertices and adding the appropriate number of loops to each vertex. (This is determined by the definition of $C_{g,p}^b$, which needs to be enhanced to $C_{g,\vec{p}}^{\vec{b}}$ to indicate which generators correspond to which components of $\partial^+W$ as in \cite{nickthesis}). The construction requires the additional step of removing pre-images of points in the interior of the cut edges, separating the surface $\Sigma$ into multiple components. Each of these edges corresponds to the belt sphere of a $4$--dimensional $1$--handle that connects the boundary components of the $4$--manifold, of which there are exactly $m-1$. One may then utilize the techniques found in the proof of Lemma~\ref{lem:compression} to see that each wedge of circles corresponds to a connected component of $\partial^+ W$ of the correct diffeomorphism type. In this setting, $\partial^-W = \Sigma$ and $\partial^+W$ has $m$ components each with a positive, possibly distinct, number of boundary components. 
\end{remark}

\begin{theorem}\label{thm:set_equiv_relative}
There is a map $\mathcal{M}$ from the set of relative group trisections to the set of pointed, parameterized relatively trisected 4-manifolds. Moreover, $\pi_1 \circ \mathcal M$ is the identity up to isomorphism of relatively trisected groups, and $\mathcal M \circ \pi_1$ is the identity up to diffeomorphism of 4-manifolds.
\end{theorem}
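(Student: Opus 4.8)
The plan is to follow the construction of Abrams--Gay--Kirby behind Theorem~\ref{thm:man2grp} in the closed case, substituting Lemma~\ref{lem:compression} for the handlebody-extension step and inserting a careful treatment of the boundary. Given a relative group trisection of $G$ as in Definition~\ref{def:relgroup}, I would first realize the central vertex $S_g^b$ by the surface $\Sigma=\Sigma_g^b$ together with a fixed identification $\phi\colon S_g^b\to\pi_1(\Sigma)$ carrying each $w_j$ to a boundary loop, exactly as in the proof of Lemma~\ref{lem:compression}. Applying Lemma~\ref{lem:compression} to each of the three surjections $S_g^b\to C_{g,p}^b$ produces three compression bodies $W_1,W_2,W_3$, each with $\partial^-W_i=\Sigma$ and $\partial^+W_i\cong P$, each inducing the prescribed homomorphism on $\pi_1$, and each unique up to diffeomorphism rel $\Sigma$. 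These serve as the pairwise intersections $X_i\cap X_{i+1}$.

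Second, I would assemble the boundary and fill in the sectors. Condition (iii) of Definition~\ref{def:relgroup} (that $w_j\mapsto\omega_j$ under every map $S_g^b\to C_{g,p}^b$) is what guarantees that the boundary parametrizations of the three pages $\partial^+W_i\cong P$ agree along the binding $\partial P=\partial\Sigma$; the pages together with the annular pieces $\partial^0 W_i\cong\partial\Sigma\times I$ then glue into a closed $3$--manifold $\partial X$ carrying an open-book decomposition with page $P$. For each sector I form the closed $3$--manifold $\partial X_i=(W_{i-1}\cup_\Sigma W_i)\cup(P\times I)$, where the thickened page $P\times I$ caps off the double $\partial(W_{i-1}\cup_\Sigma W_i)$. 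Using that the corresponding lower face of the cube has $Z_k$ as its pushout, van Kampen gives $\pi_1(\partial X_i)$ free of rank $k$; since $\partial X_i$ inherits a Heegaard-type decomposition from the two compression bodies, it is diffeomorphic to $\#^k(S^1\times S^2)$ and hence bounds $\natural^k(S^1\times B^3)$. I fill each sector $X_i$ by this $4$--dimensional $1$--handlebody, noting $\partial X_i\hookrightarrow X_i$ induces an isomorphism on $\pi_1$.

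Third, gluing $X_1,X_2,X_3$ along the common compression bodies and pages produces $X$ with boundary $\partial X$; a further application of van Kampen, now organized by the three upper faces (whose iterated pushout is $G$), identifies $\pi_1(X)\cong G$ compatibly with the entire cube, so that $\pi_1\circ\mathcal M$ recovers the given trisection up to isomorphism of relatively trisected groups. For the reverse composite I would argue by uniqueness: starting from a relatively trisected $X$, each ingredient of the reconstruction is pinned down up to diffeomorphism --- the surface by $S_g^b$, the compression bodies by Lemma~\ref{lem:compression}, and the $4$--dimensional sectors by the fact that every self-diffeomorphism of $\#^k(S^1\times S^2)$ extends over $\natural^k(S^1\times B^3)$ (Laudenbach--Po\'enaru) --- and, since all of these diffeomorphisms may be taken rel $\Sigma$ and rel the pages, they assemble on overlaps to a diffeomorphism with the original, giving $\mathcal M\circ\pi_1\simeq\id$.

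The main obstacle is the second step, the boundary construction. One must verify that the pages and compression bodies glue to a well-defined closed $3$--manifold $\partial X$ --- this is precisely where condition (iii) is indispensable --- and, more delicately, that each relative sector $\partial X_i$ is standard: that capping the double of $W_{i-1}\cup_\Sigma W_i$ by $P\times I$ neither changes the rank of $\pi_1$ nor disturbs the count of $S^1\times S^2$ summands, and that the resulting filling $X_i\cong\natural^k(S^1\times B^3)$ is unique rel its boundary and open-book data. This is where the argument genuinely departs from the closed case and where the Laudenbach--Po\'enaru extension result must be applied with care to the boundary.
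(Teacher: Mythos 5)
Your overall architecture matches the paper's proof: three applications of Lemma~\ref{lem:compression}, assembly of the spine and the induced open book on the boundary, identification of the three sector boundaries with $\#^k(S^1\times S^2)$, and a unique filling by $\natural^k(S^1\times B^3)$ via Laudenbach--Po\'enaru \cite{LP}. But there is a genuine gap at the decisive step. You claim each $\partial X_i$ is diffeomorphic to $\#^k(S^1\times S^2)$ because ``$\partial X_i$ inherits a Heegaard-type decomposition from the two compression bodies'' and has free fundamental group of rank $k$. That justification is vacuous: every closed orientable $3$--manifold admits a Heegaard splitting, so the existence of such a decomposition gives no leverage, and the implication ``free $\pi_1$ of rank $k$ implies $\#^k(S^1\times S^2)$'' is itself a deep theorem rather than a formality. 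The paper fills this hole with a specific chain of $3$--manifold topology imported from \cite{grouptri}: Stallings' theorem \cite{Stallings}, that a closed $3$--manifold whose fundamental group surjects onto a free product splits as a connected sum realizing that free product, which decomposes each sector boundary $M^{ij}$ as $M^{ij}_1\#\cdots\#M^{ij}_k$; the $3$--dimensional Poincar\'e Conjecture, to guarantee that no summand has trivial fundamental group (ruling out hidden homotopy spheres); and the classification of closed prime $3$--manifolds with infinite cyclic fundamental group, which forces each summand to be $S^1\times S^2$. You flag exactly this step as ``the main obstacle'' in your final paragraph, but flagging it is not resolving it; without this input the map $\mathcal M$ is not defined.

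A secondary omission concerns well-definedness. For $\mathcal M$ to be a map into diffeomorphism classes (and for the uniqueness half of the theorem), you must know that the page pieces attach to $W_i\cup_{\partial^-} W_j$ in an essentially unique way; the paper cites Corollary 14 of \cite{cgp}, which gives uniqueness up to isotopy rel boundary of the attachment of $(P\times I)\cup(\partial P\times D)$. Your appeal to condition (iii) of Definition~\ref{def:relgroup} correctly explains why the attachment exists and why the boundary identifications are consistent along the binding, but not why the result is independent of the choices made. With these two points supplied --- the Stallings/Poincar\'e/prime-decomposition argument and the uniqueness statement from \cite{cgp} --- your outline becomes the paper's proof.
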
 

\begin{proof}
Using Proposition~\ref{lem:compression}, we construct three such compression bodies $W_1, W_2, W_3$, one for each map $h_i:S_g^b \rightarrow C_k$, and attach $W_i\times I$ to $\Sigma_g^b \times D^2$ by identifying $W_i^- \times I$ with $\Sigma_g^b \times [t_i-\epsilon, t_i+\epsilon] \subset \Sigma_g^b \times \partial D^2$. After smoothing corners, this manifold has three boundary components which are each diffeomorphic to $W_i \cup_{\partial^{-}} W_j$. In contrast to the closed setting, we must continue constructing $\partial X_i$ for our proposed trisected manifold $X$. Let $P$ be genus $p$ surface with $b$ boundary components and $D=\{re^{i\theta}| \; 0\leq r\leq 1,\; \frac{2\pi(i-1)}{3}\leq \theta \leq \frac{2\pi i}{3}\}$ be a third of the standard disk in the plane. By Corollary 14 of \cite{cgp}, there is unique way (up to isotopy rel. boundary) to attach $(P\times I)\cup (\partial P \times D)$ to $W_i \cup_{\partial^{-}} W_j$. After doing so for each pair $(i,j)$ (and smoothing corners), we obtain a $4$--manifold with four boundary components: one given by three copies of $\cup_{(i,j)}(P\times I) \cup (\partial P \times D)$ which are attached to the compression bodies, and the other three boundary components given by $M^{ij}:=\left((P\times I)\cup (\partial P \times D)\right) \cup \left(W_i \cup_{\partial^{-}} W_j\right)$. The former is an open book decomposition, with page $P$ and binding $\partial P \times \{0\} \subset \partial P \times D^2$. Since each face of the group trisection is a pushout, each of the three diffeomorphic boundary components has fundamental group the free group of rank $k$. Note that we also make use of the requirement that $w_j\mapsto \omega_j$ in each compression body. The attached copies of $P\times I$ identifies $1$--handles between the $\partial^+ W_i$s, and each $1$--handle corresponds to an $\omega_j$.

What remains to be shown is that these three boundary components can be uniquely filled by $\natural^k S^1\times B^3$. For this we use the following argument from \cite{grouptri}. In \cite{Stallings}, Stallings showed that if the fundamental group of a $3$--manifold $Y$ surjects onto a free product $A*B$, then $Y$ can be expressed as a connect sum $Y = Y_1\#Y_2$ with $\pi_1(Y_1) = A$ and $\pi_1(Y_2)=B$. Thus, $M^{ij}=M^{ij}_1\#\cdots\# M^{ij}_k$ for each pair $(i,j).$ The $3$-dimensional Poincar\'{e} Conjecture ensures that the fundamental group of each connected summand is not trivial. However, the only closed, prime $3$--manifold with infinite cyclic fundamental group is $S^1\times S^2$. (See \cite{Hatcher3M} for details.)

Finally, we conclude the construction by filling each of the $\#^k S^1\times S^2$ boundary components uniquely with $\#^k S^1\times B^3$ by Laudenbach-Poenaru \cite{LP}.
\end{proof}

\begin{remark}\label{rmk:diagrams}
An implicit intermediate step in the above construction is the construction of a \emph{relative trisection diagram} from a $3$--splitting. The diagrammatic aspect of trisections does not come into view when discussing the categorical perspective in Section~\ref{section:category_trisection}. However, trisection diagrams are useful tools for describing explicit trisected $4$--manifolds. We recall the definition of a relative trisection diagram in Section~\ref{section:examples} where we give explicit examples of $3$--splittings and their corresponding trisected $4$--manifolds.
\end{remark}

\subsection{On maps between relative handlebodies}
\label{section:klug_results}

Understanding maps up to homotopy between handlebodies is a crucial ingredient in the categorification of trisected $4$--manifolds. We recall two results which prove useful for this purpose.

The following lemma of Klug shows that the homotopy type of maps between $4$--dimensional handlebodies is determined by their boundary maps. This immediately extends to relative trisections since each piece satisfies the hypothesis of being a $4$--dimensional handlebody.
\begin{lemma}[\hspace{1sp}{\cite[Lem. 3.2]{klugtri}}]\label{klug2} Let $W$ and $W'$ be $4$-dimensional $1$--handlebodies. Any smooth map $\partial W \to \partial W'$ extends to a map $W \to W'$. 
\end{lemma}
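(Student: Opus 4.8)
The plan is to reduce the extension problem to the vanishing of the higher homotopy groups of the target. The essential observation is that a $4$--dimensional $1$--handlebody $W' \cong \natural^{k'} S^1 \times B^3$ deformation retracts onto a wedge of $k'$ circles, and is therefore aspherical: $\pi_1(W')$ is free of rank $k'$ while $\pi_i(W') = 0$ for all $i \geq 2$. Composing the given map with the inclusion $\partial W' \hookrightarrow W'$, it suffices to extend $g := \partial W \xrightarrow{f} \partial W' \hookrightarrow W'$ across the inclusion $\partial W \hookrightarrow W$, and I would carry this out one handle at a time.

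First I would present $W$ in a handle decomposition relative to its boundary. Dualizing the standard decomposition of $W \cong \natural^k S^1 \times B^3$ (one $0$--handle and $k$ $1$--handles) turns it upside down, expressing $W$ as a collar $\partial W \times [0,1]$ to which $k$ $3$--handles and a single $4$--handle are attached along $\partial W \times \{1\}$. On the collar I would set $(x,t) \mapsto g(x)$; this already restricts to $f$ on $\partial W = \partial W \times \{0\}$ on the nose, and the subsequent steps do not alter it. Extending across a $3$--handle $D^3 \times D^1$ reduces, after retracting to the core, to nullhomotoping the map on the attaching sphere $S^2$, whose obstruction lies in $\pi_2(W')$; extending across the top $4$--handle reduces to filling a map on $S^3$, with obstruction in $\pi_3(W')$. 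Since $W'$ is aspherical both groups vanish, so each extension is unobstructed and we obtain a continuous $\bar f : W \to W'$ with $\bar f|_{\partial W} = f$. A final smooth approximation rel a collar of the boundary, valid because $f$ is already smooth, upgrades $\bar f$ to a smooth extension.

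I do not anticipate a serious obstacle, since the entire content is the asphericity of $W'$: above the fundamental-group level every obstruction group is $\pi_{\geq 2}(W') = 0$. The only genuine care is bookkeeping -- correctly dualizing the handle structure so that the handles sit on the boundary, and keeping track of the fact that $f$ maps into $\partial W' \subseteq W'$. As an alternative packaging of the same idea, one may observe that $\partial W \hookrightarrow W$ induces an isomorphism of free groups $\pi_1(\partial W) \xrightarrow{\sim} \pi_1(W)$, so the induced homomorphism $g_*$ factors as $g_* \circ j_*^{-1}$ followed by $j_*$; realizing $g_* \circ j_*^{-1} \colon \pi_1(W) \to \pi_1(W')$ by a map into the aspherical space $W'$ and invoking the homotopy extension property of the cofibration $\partial W \hookrightarrow W$ produces the desired extension agreeing with $f$ on the boundary.
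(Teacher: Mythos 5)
This lemma is quoted in the paper from \cite{klugtri} without proof, so there is no in-paper argument to compare against; the relevant comparison is with Klug's own proof, which, like yours, rests on the asphericity of the target handlebody. Your argument is correct: dualizing the handle structure so that $W$ is built from $\partial W \times [0,1]$ by attaching only $3$--handles and a $4$--handle puts every extension obstruction in $\pi_2(W')$ or $\pi_3(W')$, both of which vanish since $W' \simeq \vee^{k'} S^1$ is a $K(F_{k'},1)$, and no $\pi_0$ or $\pi_1$ obstructions arise because there are no relative handles of index $1$ or $2$; moreover, your collar formula makes the continuous extension smooth on a neighborhood of $\partial W$, so Whitney approximation rel $\partial W$ legitimately upgrades it to a smooth map. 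Your alternative packaging---using that $\pi_1(\partial W) \to \pi_1(W)$ is an isomorphism of free groups, realizing the induced homomorphism by a map into the aspherical space $W'$, and then invoking the homotopy extension property of the cofibration $\partial W \hookrightarrow W$---is equally valid and is essentially the same argument in obstruction-free language.
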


We also make use of the following lemma regarding the boundary maps between $4$--dimensional handlebodies.
\begin{lemma}[\hspace{1sp}{\cite[Lem. 3.3]{klugtri}}] \label{klug1} Let $X$ and $X'$ be $4$-dimensional handlebodies, and let $f, g: X \to X'$ be proper maps. Then any homotopy $\partial X \times I \to \partial X'$ between $f|_{\partial X}$ and $g|_{\partial X}$ extends to a homotopy $X\times I \to X'$ between $f$ and $g$.
\end{lemma}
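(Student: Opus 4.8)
The plan is to recast the statement as a single extension problem and to solve it using the asphericity of the target, exactly in the spirit of Lemma~\ref{klug2}. Since $X' \cong \natural^{k'} S^1 \times B^3$ deformation retracts onto a wedge of $k'$ circles, it is a $K(F_{k'},1)$; in particular $\pi_i(X') = 0$ for every $i \geq 2$. Because $f$ and $g$ are proper, their restrictions to $\partial X$ land in $\partial X'$, so the hypothesis that $H\colon \partial X \times I \to \partial X'$ is a homotopy from $f|_{\partial X}$ to $g|_{\partial X}$ is meaningful, and $f$, $g$, and $H$ (composed with $\partial X' \hookrightarrow X'$) agree on overlaps. They therefore assemble into a single continuous map
\[
\Phi\colon A := (X \times \{0\}) \cup (\partial X \times I) \cup (X \times \{1\}) \longrightarrow X',
\]
where $A = \partial(X \times I)$. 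A homotopy $X \times I \to X'$ of the required type is exactly an extension of $\Phi$ across $X \times I$, so the lemma reduces to this extension problem.

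First I would establish that the inclusion $\iota\colon A \hookrightarrow X \times I$ induces an isomorphism on $\pi_1$. For a $4$--dimensional $1$--handlebody the inclusion $\partial X \hookrightarrow X$ is a $\pi_1$--isomorphism $\pi_1(\partial X) \cong \pi_1(X) \cong F_k$ (true for $S^1 \times B^3$ and preserved under boundary connected sum). A van Kampen computation then yields $\pi_1(A) \cong \pi_1(X) *_{\pi_1(\partial X)} \pi_1(X)$, where both structure maps are this isomorphism; hence the amalgamated product collapses to $\pi_1(X)$. Comparing with $\pi_1(X \times I) \cong \pi_1(X)$ through the homotopy equivalence $X \times \{0\} \hookrightarrow X \times I$ shows that $\iota_*$ is an isomorphism.

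With this in place I would finish as follows. Set $\psi := \Phi_* \circ (\iota_*)^{-1}\colon \pi_1(X \times I) \to \pi_1(X')$. As $X'$ is aspherical, $\psi$ is realized by a map $F\colon X \times I \to X'$, and by construction $F \circ \iota$ and $\Phi$ induce the same homomorphism on $\pi_1(A)$; since $X'$ is a $K(\pi,1)$ they are therefore homotopic. Because $(X \times I, A)$ is a smooth manifold pair, it has the homotopy extension property, so this homotopy of $F|_A$ extends to a homotopy of $F$ terminating in a map $F'$ with $F'|_A = \Phi$. By the definition of $\Phi$, the map $F'$ restricts to $f$, $g$, and $H$ on the three faces of $A$, so $F'$ is the desired homotopy from $f$ to $g$ extending $H$.

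The main obstacle is the claim that $\iota_*$ is an isomorphism, as the entire argument rests on it: it is what lets the $\pi_1$--data of $\Phi$ extend over $X \times I$, while every potential secondary obstruction lies in $\pi_i(X')$ with $i \geq 2$ and so vanishes by asphericity. The remaining basepoint bookkeeping is routine and can be sidestepped by choosing $\Phi(a_0)$ as the basepoint of $X'$, so that $F \circ \iota$ and $\Phi$ become based maps inducing literally the same homomorphism.
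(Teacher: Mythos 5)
Your proof is correct, and there is nothing in the paper to compare it against: the paper does not prove this lemma at all, but imports it verbatim from Klug \cite[Lem.~3.3]{klugtri}. Your argument --- recasting the problem as extending $\Phi\colon \partial(X\times I)\to X'$ over $X\times I$, using that $\partial X\hookrightarrow X$ is a $\pi_1$-isomorphism for $\natural^k S^1\times B^3$ so that $\pi_1(\partial(X\times I))\to\pi_1(X\times I)$ is an isomorphism, and then invoking asphericity of $X'$ together with the homotopy extension property --- is the standard one for this statement and is essentially the argument underlying the cited source; all the steps (the van Kampen collapse of $\pi_1(X)\ast_{\pi_1(\partial X)}\pi_1(X)$, realization of the homomorphism by a map into a $K(\pi,1)$, and the rigidity of maps into aspherical targets up to homotopy) check out.
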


In order to use the above results from \cite{klugtri}, we must have a Heegaard splitting of $\partial X_i$. However, when constructing the $4$--manifold with boundary from the trisection data, the generalized Heegaard splittings do not correspond to $\partial X_i$ as they do in the closed setting. We must first finish the construction by attaching the remaining pieces of $\partial X_i$. It is only at this point that we obtain a unique Heegaard splitting of the (closed) $3$--manifold $\partial X_i$. Lemma~\ref{lem:heegaardboundary} shows that the boundary of each codimension $0$ piece of trisection is diffeomorphic to $\# S^1\times S^2$. 

To determine the Heegaard splitting of $\#S^1\times S^2$, which is unique by Waldhausen \cite{waldhausen}, we use a standard construction which we include for completeness. Given an abstract open book $\ob = (P, \phi)$, where $P$ is a genus $p$ surface with $b$ boundary components, define $H_1  = P \times [0, 1/2]$ and $H_2 = P \times [1/2, 1]$ where we have re-parametrized $S^1$ to be the union of two intervals. Each $H_i$ is clearly a handlebody since it is a product of a surface with boundary. Given any collection $\mathcal{A}_i$ of $l=2p+b-1$ disjoint properly embedded arcs which cut $P$ into a disk, we obtain a collection of $l$ disks of which cut $H_i$ into a $3$ ball by following the arcs along the product. Attaching the final piece $\partial P \times B^2$ does not change the topology of the handlebodes, as we can decompose the $B^2$ component in to half disks $B^2 = B_1\cup B_2$ and attach $\partial P \times B_i$ to $H_i$. This can be seen as adding a collar neighborhood to the boundary of $H_i$. Thus, we have produced a genus $l$ Heegaard splitting. To see the Heegaard diagram, we note that the monodromy plays a role in attaching $H_1$ and $H_2$ along their common boundary. The Heegaard surface is $\partial H_i = P \underset{\partial}{ \cup} -P$ and the attaching map is given by $id \cup \phi$. Notice in the special case when $\phi=id$ we obtain the unique genus $l$ Heegaard splitting of $\#^l S^1\times S^2.$

\begin{lemma}\label{lem:heegaardboundary}
Given a relative $(g,k;p,b)$-trisection $X = X_1\cup X_2\cup X_3$, each boundary component $\partial X_i$ is equipped with a Heegaard splitting that is unique up to isotopy.
\end{lemma}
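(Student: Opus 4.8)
The plan is to first identify the diffeomorphism type of the closed $3$--manifold $\partial X_i$, then exhibit a Heegaard splitting that is adapted to the trisection decomposition rather than to an arbitrary Morse function, and finally to extract uniqueness from the classification of Heegaard splittings of connected sums of $S^1\times S^2$.

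First I would pin down $\partial X_i$. By part (i) of Definition~\ref{defn:relative_trisection} each piece satisfies $X_i\cong\natural^k S^1\times B^3$, so its boundary is immediately diffeomorphic to $\#^k S^1\times S^2$; equivalently one can recover this from the fact that $\pi_1(\partial X_i)$ is the free group $Z_k$ sitting at the relevant face of the cube, together with Stallings' theorem, the Poincar\'e Conjecture, and the classification of closed prime $3$--manifolds with infinite cyclic fundamental group, exactly as in the proof of Theorem~\ref{thm:set_equiv_relative}. This is the identification announced in the remark preceding the statement, and it is what makes Waldhausen's theorem applicable at the end.

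Next I would produce the splitting using the trisection data, so that it is compatible with the decomposition $\partial X_i=(X_i\cap X_{i-1})\cup(X_i\cap X_{i+1})\cup(X_i\cap\partial X)$. The two pairwise intersections are the compression bodies $W_{g,p}^b$ glued along $\partial^- W\cong\Sigma_g^b$, and the remaining piece $X_i\cap\partial X$ contributes the page and binding $(P\times I)\cup(\partial P\times D)$. Following the standard construction recalled just above the statement, the idea is to use the page--region of $X_i\cap\partial X$ to close up the \emph{generalized} (compression body) Heegaard splitting along $\Sigma_g^b$ into an honest one, splitting the page into two halves and distributing each half, together with the corresponding half of the binding, to one side. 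The substantive point is that each resulting side must be verified to be a genuine handlebody: this is precisely where the argument departs from the closed case, in which the two sides are handlebodies on the nose. Here, because the pairwise intersections are only compression bodies, one must invoke the open book structure on $X_i\cap\partial X$ and the normalization $w_j\mapsto\omega_j$ from Definition~\ref{def:relgroup} to complete the compression bodies to handlebodies and to identify the resulting closed Heegaard surface and its genus.

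Finally, uniqueness is where the real leverage comes from Waldhausen's theorem \cite{waldhausen}: any two Heegaard splittings of $\#^k S^1\times S^2$ of the same genus are isotopic. Since $\partial X_i\cong\#^k S^1\times S^2$, the splitting constructed above is therefore unique up to isotopy. I expect the main obstacle to be the handlebody verification in the middle step, namely checking that after attaching the page and binding the two sides really are handlebodies and that the outcome is independent of the auxiliary choices; by contrast the diffeomorphism $\partial X_i\cong\#^k S^1\times S^2$ and the concluding uniqueness are comparatively routine given the results already cited.
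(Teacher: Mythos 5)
Your outline is correct, but it takes a genuinely different route from the paper's proof, which does not use the decomposition $\partial X_i=(X_i\cap X_{i-1})\cup(X_i\cap X_{i+1})\cup(X_i\cap\partial X)$ at all. The paper instead uses the handle structure of $X_i$ itself: it views $X_i$ as a $4$--dimensional regular neighborhood $P\times B^2\cong\natural^l S^1\times B^3$ of the page (where $l=2p+b-1$) with $1$--handles attached, takes the Heegaard splitting of $\partial(P\times B^2)$ coming from its trivial-monodromy open book (the construction $H_1=P\times[0,1/2]$, $H_2=P\times[1/2,1]$ recalled just before the lemma), connect-sums with the standard splitting of the $\#S^1\times S^2$ summands contributed by the $1$--handles, and then quotes Waldhausen. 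Your construction -- closing up the generalized splitting $W\cup_\Sigma W'$ by halving the page-and-binding region -- stays closer to the trisection decomposition of $\partial X_i$, which is arguably more natural for the way Lemma~\ref{lem:heegaardboundary} is later used in the proof of Theorem~\ref{thm:equiv_relative}; the paper's route makes the identification $\partial X_i\cong\#^k S^1\times S^2$ and the genus bookkeeping automatic.

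Two substantive comments. First, the step you flag as the main obstacle is real but much easier than you suggest, and the ingredient you cite for it is wrong: the normalization $w_j\mapsto\omega_j$ is a condition on relative \emph{group} trisections (Definition~\ref{def:relgroup}) and plays no role in this purely manifold-level lemma. What you actually need is: attaching $P\times[0,1/2]$ along $\partial^+W\cong P$ and attaching the half-binding $\partial P\times B_1$ are collar attachments, so each side of your splitting is diffeomorphic to $W_{g,p}^b$; and $W_{g,p}^b$ is abstractly a handlebody of genus $g+p+b-1$ whenever $b>0$, because reading its Morse function upside down exhibits it as $P\times I$ with $g-p$ one-handles attached, and $P\times I$ is a handlebody precisely because $\partial P\neq\emptyset$ (this is exactly where the exclusion $p>b=0$ matters). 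This genus matches that of the closed surface $\Sigma_g^b\cup_\partial(-P)$, so you do get an honest Heegaard splitting. Second, be aware of a genus discrepancy: your splitting has genus $g+p+b-1$, whereas the paper's construction produces one of genus $(2p+b-1)+k$, and these can differ -- for the $(2,0;0,1)$--trisection of punctured $S^2\times S^2$ in Section~\ref{section:examples}, yours has genus $2$ and the paper's has genus $0$, both splitting $\partial X_i\cong S^3$. Since Waldhausen's theorem gives uniqueness up to isotopy only among splittings of a \emph{fixed} genus, the two constructions agree only after stabilization; each is individually well-defined up to isotopy, which is all the lemma asserts, but ``the'' induced splitting genuinely depends on which construction one fixes.
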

\begin{proof}
Let us denote by $P$ the page of the open book on $\partial X$ induced by the given trisection of $X$. For each $i$, $X_i$ can be viewed by attaching $4$--dimensional $1$--handles to a $4$--dimensional regular neighborhood of $P$. The thickened surface is $P\times B^2\cong \natural^l S^1\times B^3$, where $l = 2p+b-1$ is the number of $2$--dimensional $1$--handles of $P$. The boundary of this thickening, $(P\times \partial B^2) \cup (\partial P \times B^2),$ inherits its own open book decomposition, where $(P\times \partial B^2)$ is the mapping torus using the identity map and $(\partial P \times B^2)$ is a neighborhood of the binding . We obtain a Heegaard splitting $H_1\cup H_2$ from this open book as above. 

Attaching the $4$--dimensional $1$--handles to complete the construction of $X_i$ has the effect of boundary connect summing $S^1\times B^3$. Thus, after attaching $k$ $1$--handles, the boundary changes by a connected sum with $\#^k S^1\times S^2.$ By Waldhausen Theorem, we have a genus $k$ Heegaard splitting of $\#^k S^1\times S^2$, unique up to isotopy, which we can stabilize as needed. Let us denote this stabilized splitting as $\mathcal{H}_1 \cup \mathcal{H}_2$. Taking the connect sum of these two Heegaard splitting, we obtain the unique genus $g>l+k$ Heegaard splitting of $\partial X_i$.
\end{proof}

\section{Category of Trisections}
\label{section:category_trisection}

We provide a categorical framework to relate group and manifold trisections in the relative case (i.e., for trisected manifolds with boundary and their relation to $(g,k; p,b)$-trisections of groups), and give a functorial description of the maps $\mathcal{M}$ and $\mathcal{G}$ defined above. We begin by recalling work of Klug \cite{klugtri} in the case of closed 3-manifolds with Heegaard splitting and closed 4-manifolds with trisection, which categorifies the main result of \cite{grouptri} (see Theorem~\ref{thm:man2grp}).  We then establish a similar functorial description for relative trisections of 4-manifolds.

Recall that for categories $\mathsf C$, $\mathsf D$, a (covariant) functor $F: \mathsf C \to \mathsf D$ is an \emph{equivalence (of categories)} if it is both fully faithful and essentially surjective. This means:
\begin{itemize}
\item (faithful) for all objects $A, A'$ in $\mathsf C$, the induced map $$\text{Mor}_{\mathsf C}(A,A') \to \text{Mor}_{\mathsf D}(F(A), F(A'))$$ is injective.
\item (full) for all objects $A, A'$ in $\mathsf C$, the induced map $$\text{Mor}_{\mathsf C}(A,A') \to \text{Mor}_{\mathsf D}(F(A), F(A'))$$ is surjective.
\item (essentially surjective) each object $B$ in $\mathsf D$ is isomorphic to $F(A)$ for some object $A$ in $\mathsf C$.
\end{itemize}
We write $\mathsf C \simeq \mathsf D$ to denote the equivalence.

\subsection{Closed 3-manifolds} Using the fundamental group, one functorially encodes the data of a closed 3-manifold with Heegaard splitting in a purely algebraic way. This associates to each decomposed manifold a \emph{splitting homomorphism} \cite{klugtri}, which we call \emph{2-splitting homomorphisms}.

Let $\mathsf{Spl3Man}$ denote the category of closed 3-manifolds with Heegaard splitting and smooth maps preserving this splitting. An object of this category is given by
\begin{itemize}
\item a closed 3-manifold $M$,
\item a decomposition $M = H_1 \cup H_2$ where $H_i$ is a genus $g$ handlebody and $S: = H_1\cap H_2 = \partial H_1 = -\partial H_2$,
\item a basepoint $x \in S$,
\item parameterizations $\pi_1(H_1, x) \cong F_g \cong \pi_1(H_2, x)$, where $F_g$ is a free group of rank $g$.
\end{itemize}
Morphisms of $\mathsf{Spl3Man}$ are given by smooth maps which preserve the Heegaard splitting and basepoint, and where two maps are equivalent if they are homotopic by a homotopy which preserves the splitting and basepoint.

Moving to the algebraic setting, let $\mathsf{2SplHom}$ denote the category of pairs $$(f_1, f_2): S_g \to F_g \times F_g,$$ where each $f_i:  S_g \to F_g $ is a surjective group homomorphism. We refer to such pairs as \emph{2-splitting homomorphisms}. Taking the pushout of the homomorphisms $f_1$ and $f_2$, one may realize 2-splitting homomorphisms as diagrams 

\[
\begin{tikzcd}[row sep=scriptsize, column sep=scriptsize]
      & F_g \ar[rd] & \\
    S_g \ar[ru]  \ar[rd] & & G\\
    & F_g \ar[ur]
\end{tikzcd}
\]
which gives a group-theoretic analogue of Heegaard splittings of 3-manifolds. 

Morphisms $(f_1, f_2) \to (g_1, g_2)$ in $\mathsf{2SplHom}$ are given by triples $\varphi = (\varphi_0, \varphi_1, \varphi_2)$ of group homomorphisms $\varphi_0:S_g \to S_g$ and $\varphi_i: F_g \to F_g$ so that the following diagram commutes:
\[
\begin{tikzcd}[row sep=scriptsize, column sep=scriptsize]
      & F_g \ar[rr, "\varphi_1"]& & F_g  \\
    S_g \ar[ru, "f_1"]  \ar[rd, swap, "f_2"] \ar[rr, "\varphi_0"] &  & S_g \ar[ru, "g_1"]  \ar[rd, swap, "g_2"] & \\
    & F_g \ar[rr, swap, "\varphi_2"]& &F_g 
\end{tikzcd}
\]
Such maps determine a homomorphism between the corresponding square diagrams using the universal property of the pushout. 

Given a 3-manifold $M$ with Heegaard splitting $M = H_1 \cup H_2$, $S: = H_1\cap H_2 = \partial H_1 = \partial H_2$, applying the fundamental group to each piece of the decomposition recovers the 2-splitting homomorphism $$\pi_1(S, x) \to \pi_1(H_1, x) \times \pi_1(H_2, x),$$ where the surjective homomorphisms are induced by the inclusion maps $S \hookrightarrow H_i$. Given two 3-manifolds $M, N$ with Heegaard splittings, and a smooth map $f: M \to N$ which preserves the Heegaard splittings and basepoints (i.e., a morphism of $\mathsf{Spl3Man}$), the induced map on fundamental groups yields a 2-splitting homomorphism \cite[p. 10876]{klugtri}. This defines a functor $$\pi_1: \mathsf{Spl3Man} \to \mathsf{2SplHom}.$$ First established on the level of objects in \cite{Jaco1, Jaco2}, and then enriched to a categorical description in \cite{klugtri}, there is a functor $\mathscr M_3: \mathsf{2SplHom} \to \mathsf{Spl3Man}$ which is an equivalence.

\begin{proposition}[\hspace{1sp}\cite{klugtri}, Thm. 2.5]
The functor  $  \mathscr M_3: \mathsf{2SplHom} \to  \mathsf{Spl3Man}$ defines an equivalence of categories. 
\end{proposition}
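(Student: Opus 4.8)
The plan is to show that $\pi_1 \colon \mathsf{Spl3Man} \to \mathsf{2SplHom}$ and $\mathscr M_3$ form a quasi-inverse pair, which is equivalent to the asserted statement since a functor admitting a quasi-inverse is an equivalence. The single structural input that drives everything is that a closed orientable surface of genus $g \geq 1$ and a genus $g$ handlebody are aspherical, i.e.\ Eilenberg--MacLane spaces $K(\pi,1)$. Consequently, for such spaces $X,Y$, restriction to $\pi_1$ gives a bijection between based homotopy classes of maps $X \to Y$ and homomorphisms $\pi_1(X) \to \pi_1(Y)$, and this is precisely what lets us pass between the smooth morphisms of $\mathsf{Spl3Man}$ (taken up to splitting- and basepoint-preserving homotopy) and the algebraic morphisms of $\mathsf{2SplHom}$. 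I would first record this classification as a preliminary, together with the parameterization bookkeeping that fixes the isomorphisms $\pi_1(H_i) \cong F_g$.

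Next I would establish the object-level quasi-inverse property. Given a $2$-splitting homomorphism $(f_1,f_2) \colon S_g \to F_g \times F_g$, the functor $\mathscr M_3$ is defined by realizing each surjection $f_i \colon \pi_1(S) \to F_g$ by the inclusion of $S$ as the boundary of a handlebody $H_i$, which exists and is unique up to diffeomorphism rel $\partial$; this is the classical closed analogue of Lemma~\ref{lem:compression}, due to Jaco \cite{Jaco1,Jaco2}. Gluing $H_1 \cup_S H_2$ produces a Heegaard-split $3$-manifold $M$ with $\pi_1(M)$ the pushout $G$. The uniqueness clause then yields the two natural isomorphisms on objects: applying $\pi_1$ to $M = \mathscr M_3(f_1,f_2)$ recovers $(f_1,f_2)$ up to isomorphism of splitting homomorphisms, and conversely rebuilding handlebodies from $\pi_1(M)$ returns manifolds diffeomorphic rel $\partial$ to the original $H_i$, hence a splitting- and basepoint-preserving diffeomorphism $\mathscr M_3(\pi_1(M)) \cong M$.

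I would then treat morphisms, which is where full faithfulness is verified. For \emph{fullness}, given an algebraic morphism $(\varphi_0,\varphi_1,\varphi_2)$, realize $\varphi_0$ by a map $S \to S'$ and each $\varphi_i$ by a map $H_i \to H_i'$ using the $K(\pi,1)$ classification; commutativity of the two triangular faces of the morphism diagram guarantees that the boundary restrictions of the $H_i \to H_i'$ agree with $S \to S'$ up to based homotopy, so after homotoping they agree on $S$ and glue to a single splitting-preserving map $M \to M'$ inducing $(\varphi_0,\varphi_1,\varphi_2)$. For \emph{faithfulness}, two such smooth morphisms inducing the same algebraic data restrict, on each aspherical piece, to based-homotopic maps; assembling these homotopies compatibly across $S$ shows the morphisms coincide in $\mathsf{Spl3Man}$. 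Tracking these constructions through the object-level isomorphisms above gives naturality, completing the quasi-inverse verification.

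The main obstacle is the \emph{gluing/assembly step} that appears in both fullness and faithfulness: the maps (resp.\ homotopies) produced independently on $H_1$ and $H_2$ a priori only agree with the surface data up to homotopy, and one must upgrade this to literal agreement along the Heegaard surface $S$ so that the pieces assemble into a genuinely splitting-preserving map (resp.\ homotopy) of the glued $3$-manifold. Carrying this out requires the homotopy-extension property for the aspherical pieces and careful control of basepoints and the chosen parameterizations, so that the resulting map is a morphism in $\mathsf{Spl3Man}$ on the nose rather than merely up to the coarse equivalence. Everything else reduces to the realization-and-uniqueness theorem for handlebodies and the Hopfian/asphericity formalism already available.
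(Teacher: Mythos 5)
The paper itself does not prove this proposition---it is imported verbatim from \cite{klugtri} (Thm.\ 2.5)---so the comparison is against the cited proof, and your reconstruction matches it: Jaco's realization-and-uniqueness theorem \cite{Jaco1, Jaco2} on objects, and on morphisms the $K(\pi,1)$ classification of maps into aspherical targets together with homotopy-extension gluing along the Heegaard surface, which is precisely the obstruction-theoretic strategy Klug uses (its $4$--dimensional analogues are quoted in this paper as Lemmas~\ref{klug2} and~\ref{klug1}). Your outline is correct in this framework, with the one caveat you implicitly acknowledge by restricting to genus $g\geq 1$: the asphericity argument cannot cover the genus-$0$ splitting of $S^3$, which must be excluded or treated separately since $S^2$ is not a $K(\pi,1)$.
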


\subsection{Closed 4-manifolds} A similar situation arises in the 4-dimensional case.  To any closed 4-manifold with trisection, we associate a \emph{3-splitting homomorphism} (cf. \cite{klugtri}). The data of a 3-splitting homomorphism is equivalent to a \emph{group trisection} as discussed in Definition~\ref{defn:group_trisection} (see also \cite{grouptri}). 

Let $\mathsf{Tris4Man}$ denote the category of closed 4-manifolds with trisection and smooth trisection-preserving maps. An object of this category is given by
\begin{itemize}
\item a closed 4-manifold $X$
\item a decomposition $X = H_1 \cup H_2 \cup H_3$
\item a base point $x \in X_1 \cap X_2 \cap X_3$
\item parameterizations $\pi_1(X_1\cap X_2) \cong \pi_1(X_2\cap X_3, x) \cong \pi_1(X_1\cap X_3, x) \cong H_g$ and $\pi_1(X_1 \cap X_2 \cap X_3, x) \cong S_g$.
\end{itemize}
Morphisms are given by smooth maps preserving the splitting and basepoint considered up to homotopies which preserve the trisection and basepoint.

Let $\mathsf{3SplitHom}$ denote the category of 3-splitting homomorphisms, i.e., triples of surjective group homomorphisms $(f_1, f_2, f_3): S_g \to H_g \times H_g \times H_g$ such that 
\begin{itemize}
\item the pushout of any pair $f_i \times f_j: S_g \to H_g \times H_g$ is isomorphic to a free group for $1 \leq i < j \leq 3$.
\end{itemize}
 We view such triples as cubical diagrams 
\begin{center}
\begin{tikzcd}[row sep=scriptsize, column sep=scriptsize]
    & H_g \arrow[r] \arrow[dr] &   Z_k \arrow[dr]  &\\
    S_g \arrow[r]\arrow[dr]\arrow[ur]  &   H_g \arrow[ur, crossing over] \arrow[dr]   &   Z_k \arrow[r]  &   G\\
    &   H_g \arrow[r] \arrow[ur] &   Z_k \arrow[ur] \arrow[ul, leftarrow, crossing over] &
\end{tikzcd}
\end{center}
by taking pushouts along each pair and then taking subsequent pushouts along the resulting maps (see Definition~\ref{defn:group_trisection}). The data of a 3-splitting homomorphism is equivalent to the data of a  group trisection. A morphism $(f_1, f_2, f_3) \to (g_1, g_2, g_3)$ of 3-splitting homomorphisms is given by a quadruple $\varphi= (\varphi_0, \varphi_1, \varphi_2, \varphi_3)$ of group homomorphisms $\varphi_0: S_g \to S_g$, $\varphi_i: H_g \to H_g$ so that the following diagram commutes:
\[
\begin{tikzcd}[row sep=scriptsize, column sep=scriptsize]
& H_g \ar[rr, "\varphi_1"]& & H_g \\
S_g \ar[rr, "\varphi_0"] \ar[ru, "f_1"] \ar[dr, "f_2"] \ar[dd, "f_3"]&  & S_g \ar[ru, "g_1"] \ar[dr, "g_2"]  \ar[dd,  near end, "g_3"']& \\
& H_g \ar[rr, near start, "\varphi_2"] & & H_g\\
 H_g \ar[rr,  "\varphi_3"] & &  H_g \ar[from=uu, crossing over] & \\
\end{tikzcd}
\]
A homomorphism in $\mathsf{3SplitHom}$ thus determines a map between the associated cubical pushout diagrams (i.e., a map between group trisections) by iteratively using the universal property of the pushout. 

To every closed trisected 4-manifold $X$, one may functorially associate a 3-splitting homomorphism (or trisected group) by taking the fundamental groups of each of $X,$ $X_i$, $X_i \cap X_j$ ($i,j = 1, 2, 3$), and $X_1 \cap X_2 \cap X_3$, along with the maps induced from the natural inclusions. In the other direction, a 3-splitting homomorphism determines a unique trisected 4-manifold (up to diffeomorphism). Morphisms of 3-splitting homomorphisms uniquely determine morphisms of the associated 4-manifolds which preserve the trisections (up to homotopy). The map $\mathcal{M}$ (see Theorem~\ref{thm:man2grp}) from the set of 3-splitting homomorphisms (or group trisections) to the set of trisected $4$-manifolds may therefore also be viewed as a functor, denoted $\mathscr M_4$.

\begin{theorem}[\hspace{1sp}\cite{klugtri}, Thm. 3.4]\label{thm:equiv_4man}
The functor $\mathscr{M}_4:  \mathsf{3SplHom} \to  \mathsf{Tris4Man}$ is an equivalence of categories.
\end{theorem}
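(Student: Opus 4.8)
The plan is to exhibit $\pi_1 \colon \mathsf{Tris4Man} \to \mathsf{3SplHom}$ as a quasi-inverse to $\mathscr{M}_4$, so that the equivalence follows once we produce natural isomorphisms $\pi_1 \circ \mathscr{M}_4 \cong \mathrm{id}_{\mathsf{3SplHom}}$ and $\mathscr{M}_4 \circ \pi_1 \cong \mathrm{id}_{\mathsf{Tris4Man}}$. The object-level content of these isomorphisms, together with essential surjectivity of $\mathscr{M}_4$, is already supplied by Theorem~\ref{thm:man2grp}: every trisected $4$--manifold $X$ is diffeomorphic (preserving the trisection) to $\mathscr{M}_4(\pi_1(X))$, and every $3$--splitting homomorphism $A$ is isomorphic to $\pi_1(\mathscr{M}_4(A))$. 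What remains is to promote $\mathscr{M}_4$ to a functor on morphisms, to note that $\pi_1$ is functorial (immediate, since it is just the application of the fundamental group to each piece of the decomposition and to its double and triple intersections), and to check that the two object-level isomorphisms assemble into natural transformations.

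First I would construct $\mathscr{M}_4$ on morphisms. Given a morphism $\varphi = (\varphi_0, \varphi_1, \varphi_2, \varphi_3)$ of $3$--splitting homomorphisms, I build a trisection-preserving map $\psi \colon X \to X'$ between the associated $4$--manifolds by an induction on the dimension of the pieces. On the central surface, $\varphi_0 \colon S_g \to S_g$ is induced by a based map $\Sigma_g \to \Sigma_g'$, unique up to based homotopy because $\Sigma_g$ is a $K(S_g,1)$. Commutativity of the cube then lets me extend over each $3$--dimensional handlebody $X_i \cap X_j$ realizing the corresponding $\varphi_i \colon H_g \to H_g$, again up to homotopy by asphericity of the handlebodies. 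Assembling the two handlebodies meeting along $\Sigma_g$ yields a map on each Heegaard-split boundary $\partial X_i = (X_i \cap X_{i-1}) \cup_{\Sigma} (X_i \cap X_{i+1})$, which is exactly a morphism of the $2$--splitting homomorphism associated to the corresponding face of the cube; I realize it by a splitting-preserving map $\partial X_i \to \partial X_i'$ via the $3$--manifold equivalence $\mathscr{M}_3$ (\cite{klugtri}, Thm.~2.5). Finally, Lemma~\ref{klug2} extends this boundary map to a smooth map $X_i \to X_i'$ of the $4$--dimensional handlebodies. Since the pieces were defined compatibly on the overlaps $X_i \cap X_j$, the maps $X_i \to X_i'$ glue to a global trisection-preserving $\psi$.

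I expect the main obstacle to be showing that $\mathscr{M}_4(\varphi)$ is well defined as a morphism of $\mathsf{Tris4Man}$, i.e.\ independent of the choices made at each stage, modulo trisection-preserving homotopy. The homotopy indeterminacy introduced at each gluing stage --- different realizations of $\varphi_0$, of the handlebody extensions, and of the $\mathscr{M}_3$-realizations on the $\partial X_i$ --- must be absorbed exactly by the equivalence relation defining morphisms. This is precisely what Lemma~\ref{klug1} provides: any homotopy of the boundary maps extends to a homotopy of the $4$--dimensional pieces, so that any two admissible choices yield maps $X \to X'$ that are homotopic through trisection-preserving maps. Functoriality ($\mathscr{M}_4(\varphi' \circ \varphi) = \mathscr{M}_4(\varphi') \circ \mathscr{M}_4(\varphi)$ and preservation of identities) then follows from the same uniqueness up to homotopy, since both sides are admissible realizations of the same composite morphism.

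With $\mathscr{M}_4$ and $\pi_1$ both functorial, the last step is naturality of the two object-level isomorphisms of Theorem~\ref{thm:man2grp}. For $\pi_1 \circ \mathscr{M}_4 \cong \mathrm{id}$ this amounts to checking that applying $\pi_1$ to the realization $\mathscr{M}_4(\varphi)$ recovers $\varphi$ up to the fixed parameterizing isomorphisms, which holds because each stage of the construction was chosen to induce the prescribed map on fundamental groups. For $\mathscr{M}_4 \circ \pi_1 \cong \mathrm{id}$ one checks that the diffeomorphism $\mathscr{M}_4(\pi_1(X)) \cong X$ is compatible with trisection-preserving maps, again reducing to the boundary-determines-interior principle of Lemmas~\ref{klug2} and~\ref{klug1}. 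Having exhibited mutually quasi-inverse functors, we conclude that $\mathscr{M}_4$ is an equivalence of categories.
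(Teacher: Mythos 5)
Your proposal is correct and takes essentially the same route as the paper: this theorem is cited from Klug, and the paper reproduces precisely your strategy when it proves the relative analogue (Theorem~\ref{thm:equiv_relative}) --- the functor is defined on objects by the set-level correspondence (Theorem~\ref{thm:man2grp}), on morphisms by realizing the algebraic data on the spine and then on the Heegaard-split boundaries $\partial X_i$ before extending into the $4$--dimensional handlebodies via Lemma~\ref{klug2}, with all homotopy indeterminacy absorbed by Lemma~\ref{klug1} and the $3$--dimensional equivalence $\mathscr{M}_3$. Your packaging of the conclusion via quasi-inverse functors, rather than the paper's fully-faithful-plus-essentially-surjective check, is an equivalent formulation rather than a genuinely different argument.
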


\subsection{Trisected 4-manifolds with boundary} 
\label{subsection:cat_4man}

Let $\mathsf{Tris4Man}^{\partial}$ denote the category of relatively trisected 4-manifolds with boundary and smooth maps preserving the relative trisections. An object of this category is given by 
\begin{itemize}
\item a 4-manifold $X$ with boundary,
\item a relative trisection $X = X_1 \cup X_2 \cup X_3$, 
\item a specified basepoint $x \in X_1 \cap X_2 \cap X_3$, 
\item fixed parametrizations
\end{itemize}
\begin{center}
$\pi(X_1 \cap X_2, x ) \cong \pi_1(X_2\cap X_3, x) \cong \pi_1(X_3 \cap X_1, x) \cong C^b_{g,p},$\\
	$\pi_1(X_1 \cap X_2 \cap X_3, x) \cong S^b_g,$
\end{center}
for some non-negative integers satisfying $g-p\geq 0$ and $b>0$. 
Morphisms of $\mathsf{Tris4Man}^{\partial}$ are given by smooth maps which preserve the boundaries, the relative trisection, and basepoint, considered up to homotopies which preserves the relative trisection and basepoint.

Recall the group presentations for $S_g^b$, $C_{g,p}^b$, and $Z_k$ preceding Defintion~\ref{def:relgroup}. Let $\mathsf{3SplHom}^{\partial}$ denote the category of \emph{relative 3-splitting homomorphisms}. Objects are given by triples of surjective homomorphisms $(f_1, f_2, f_3): S^b_g \to C_{g,p}^b \times C_{g,p}^b \times C_{g,p}^b$ (for some choice of non-negative integers satisfying $g-p\geq 0$ and $b>0$) such that 
	\begin{itemize}
		\item the pushout of any pair $f_i \times f_j: S^b_g \to C_{g,p}^b \times C_{g,p}^b$ is isomorphic to a free group for $1 \leq i < j \leq 3,$
		\item for each $i, j$, we have $f_i(w_j) = \omega_j$. 
	\end{itemize}
We may view such triples as cubical diagrams (just as in Defintion~\ref{def:relgroup})
\begin{center}
\begin{tikzcd}[row sep=scriptsize, column sep=scriptsize]
    & C_{g,p}^b \arrow[r] \arrow[dr] &   Z_k \arrow[dr]  &\\
    S_g^b \arrow[r]\arrow[dr]\arrow[ur]  &   C_{g,p}^b \arrow[ur, crossing over] \arrow[dr]   &   Z_k \arrow[r]  &   G\\
    &   C_{g,p}^b \arrow[r] \arrow[ur] &   Z_k \arrow[ur] \arrow[ul, leftarrow, crossing over] &
\end{tikzcd}
\end{center}
by taking pushouts along each pair and then taking subsequent pushouts along the resulting maps. The data of a relative 3-splitting homomorphism is thus equivalent to the data of a relative group trisection. A morphism $\varphi: (f_1, f_2, f_3) \to (g_1, g_2, g_3)$ of relative 3-splitting homomorphisms is given by a quadruple $\varphi=(\varphi_0, \varphi_1, \varphi_2, \varphi_3)$ of group homomorphisms $\varphi_0: S^b_g \to S^b_g$ and $\varphi_i: C_{g,p}^b \to C_{g,p}^b$ so that the following diagram commutes:
\[
\begin{tikzcd}[row sep=scriptsize, column sep=scriptsize]
& C_{g,p}^b \ar[rr, "\varphi_1"]& & C_{g,p}^b \\
S^b_g \ar[rr, "\varphi_0"] \ar[ru, "f_1"] \ar[dr, "f_2"] \ar[dd, "f_3"]&  & S^b_g \ar[ru, "g_1"] \ar[dr, "g_2"]  \ar[dd,  near end, "g_3"']& \\
&C_{g,p}^b \ar[rr, near start, "\varphi_2"] & & C_{g,p}^b\\
C_{g,p}^b \ar[rr,  "\varphi_3"] & &  C_{g,p}^b \ar[from=uu, crossing over] & \\
\end{tikzcd}
\]
A morphism in $\mathsf{3SplitHom}^{\partial}$ thus determines a map between the pushout cubical diagrams (i.e., between the relative group trisections) by iteratively using the universal property of the pushout.

Similar to the closed 4-manifold case, to each closed trisected 4-manifold $X$ with boundary, one may functorially associate a relative 3-splitting homomorphism (or relative group trisection) by taking the fundamental groups of each of $X,$ $X_i$, $X_i \cap X_j$ ($i,j = 1, 2, 3$), and $X_1 \cap X_2 \cap X_3$, along with the maps induced from the natural inclusions. In the other direction, a relative 3-splitting homomorphism determines a unique trisected 4-manifold with boundary (up to diffeomorphism). Morphisms of relative 3-splitting homomorphisms uniquely determine morphisms of the associated 4-manifolds with boundary which preserve the relative trisections (up to homotopy). The map $\mathcal{M}$ of Theorem~\ref{thm:set_equiv_relative}, from the set of relative 3-splitting homomorphisms (or relative group trisections) to the set of trisected $4$-manifolds with boundary may therefore be viewed as a functor, denoted $\mathscr M^{\partial}_4$.

\begin{theorem}\label{thm:equiv_relative}
There is a functor $ \mathscr M_4^{\partial}: \mathsf{3SplHom}^{\partial} \to \mathsf{Tris4Man}^{\partial}$ which agrees with the map $\mathcal M$ on objects. Moreover, $\mathscr M ^{\partial}_4$ defines an equivalence of categories.
\end{theorem}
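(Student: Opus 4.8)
The plan is to follow the strategy of Klug's proof of Theorem~\ref{thm:equiv_4man}, substituting the relative analogues of the key lemmas. On objects there is nothing new to do: by Theorem~\ref{thm:set_equiv_relative} the assignment $\mathcal M$ already sends a relative group trisection (equivalently, a relative $3$-splitting homomorphism) to a relatively trisected $4$-manifold, and we declare $\mathscr M_4^{\partial}$ to agree with $\mathcal M$ there. The real content is to (i) define $\mathscr M_4^{\partial}$ on morphisms, (ii) verify functoriality, and (iii) prove the resulting functor is fully faithful and essentially surjective. I would organize (iii) around the fundamental-group functor $\pi_1 : \mathsf{Tris4Man}^{\partial} \to \mathsf{3SplHom}^{\partial}$, which serves as a quasi-inverse, so that the two object-level identities of Theorem~\ref{thm:set_equiv_relative} promote to natural isomorphisms of functors.

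To define $\mathscr M_4^{\partial}$ on a morphism $\varphi=(\varphi_0,\varphi_1,\varphi_2,\varphi_3)$, I would realize the algebraic data geometrically, working outward from the central surface. First realize $\varphi_0 : S_g^b \to S_g^b$ by a based smooth map $\sigma : \Sigma_g^b \to \Sigma_g^b$; since a surface with boundary is aspherical, $\sigma$ is determined up to based homotopy by $\varphi_0$. The commutativity of the morphism cube gives $\varphi_i \circ f_i = f_i \circ \varphi_0$, so $\varphi_0$ carries $\ker f_i$ into itself; because $W_i$ is built from $\Sigma_g^b \times I$ by attaching $2$-handles along curves normally generating $\ker f_i$ (Lemma~\ref{lem:compression}) and is aspherical, $\sigma$ extends over each compression body $W_i$ to a map realizing $\varphi_i$. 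Using Lemma~\ref{klug2} I then extend these boundary maps over each codimension-$0$ handlebody $X_i \cong \natural^k S^1\times B^3$. What remains, and what has no counterpart in the closed case, is to extend the map across the boundary pieces $(P\times I)\cup(\partial P\times D)$ comprising $\partial X$; here I would use the open-book page structure together with the uniquely determined Heegaard splitting of each $\partial X_i$ furnished by Lemma~\ref{lem:heegaardboundary} to fill in the map compatibly with the relative trisection. Lemma~\ref{klug1} shows that the homotopy class rel trisection of the resulting map $\mathscr M_4^{\partial}(\varphi)$ is independent of all choices, which simultaneously yields well-definedness and, applied to composites and identities, functoriality.

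For the equivalence itself, essential surjectivity is immediate from Theorem~\ref{thm:set_equiv_relative}: since $\mathcal M\circ\pi_1$ is the identity up to diffeomorphism, every object of $\mathsf{Tris4Man}^{\partial}$ is isomorphic to one in the image of $\mathscr M_4^{\partial}$. Faithfulness follows from asphericity: if $\mathscr M_4^{\partial}(\varphi)$ and $\mathscr M_4^{\partial}(\varphi')$ are homotopic through the allowed homotopies, they induce the same maps on $\pi_1(\Sigma_g^b)$ and each $\pi_1(W_i)\cong C_{g,p}^b$, forcing $\varphi=\varphi'$. Fullness comes from running the construction in reverse: given a trisection-preserving smooth map $F$ between the manifolds, apply $\pi_1$ to each piece $\Sigma_g^b$, $W_i$, $X_i$ and their intersections to obtain a quadruple $(\varphi_0,\varphi_1,\varphi_2,\varphi_3)$; the conditions defining a relative $3$-splitting morphism hold because $F$ preserves the splitting and basepoint, and by Lemma~\ref{klug1} together with the uniqueness statements of Lemmas~\ref{lem:compression} and~\ref{lem:heegaardboundary}, $\mathscr M_4^{\partial}$ of this quadruple recovers $F$ up to the permitted homotopy. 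Hence the induced maps on morphism sets are bijective.

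I expect the main obstacle to lie in the extension of the map across the boundary $\partial X$. In the closed setting of Theorem~\ref{thm:equiv_4man}, each $\partial X_i$ is exactly a union of two compression bodies and the handlebody lemmas apply directly; in the relative setting the pieces $(P\times I)\cup(\partial P\times D)$ and the open-book binding intervene, and the Heegaard splitting of $\partial X_i$ only emerges after these are attached (Lemma~\ref{lem:heegaardboundary}). The delicate point is to ensure that the extended map carries the page $P$ of the induced open book to the corresponding page compatibly with the monodromy, so that the relative trisection together with its boundary open book is genuinely preserved and the extension is unique up to the permitted homotopies. Once this compatibility is established, the rest of the argument is a formal transcription of Klug's closed-case proof.
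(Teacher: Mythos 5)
Your proposal follows essentially the same route as the paper's proof: objects via Theorem~\ref{thm:set_equiv_relative}, morphisms built from the spine outward and completed using Lemmas~\ref{lem:compression}, \ref{lem:heegaardboundary}, \ref{klug2}, and~\ref{klug1}, essential surjectivity from the object-level bijection, and fullness by taking $\varphi = \pi_1(\psi)$ and comparing $\psi$ with $\mathscr M_4^{\partial}(\varphi)$. The only notable divergence is your faithfulness argument, which appeals directly to asphericity of the central surface and compression bodies, whereas the paper restricts to the boundary Heegaard splittings and invokes Waldhausen's theorem together with Klug's three-dimensional equivalence $\mathsf{Spl3Man}\simeq\mathsf{2SplHom}$; both are valid, and your flagged concern about extending maps across $(P\times I)\cup(\partial P\times D)$ compatibly with the open book is exactly the point the paper settles with Lemma~\ref{lem:heegaardboundary}.
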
 

\begin{proof}
On objects, the functor $\mathscr M_4^{\partial}$ is defined in Theorem~\ref{thm:set_equiv_relative}. More precisely, a relative 3-splitting homomorphism $f: =(f_1, f_2, f_3): S^b_g \to C_{g,p}^b \times C_{g,p}^b \times C_{g,p}^b$ defines a $(g,k; p, b)$-trisection of the group $G$ obtained by taking iterated pushouts of pairs of the $f_i$'s. Furthermore, we have $G \cong \pi_1(X^f)$ for a $(g,k; p,b)$-trisected 4-manifold $X^f$ with boundary, and we take $\mathscr M^{\partial}_4(f) = X^f$.

Given relative 3-splitting homomorphisms $f = (f_1, f_2, f_3)$ and $g = (g_1, g_2, g_3)$, and a map of relative 3-splitting homomorphisms $\varphi: (f_1, f_2, f_3) \to (g_1, g_2, g_3)$, we construct a smooth map $\mathscr{M}_4^{\partial}(\varphi): X^f \to X^g$ which preserves the boundary, trisection, and basepoint (up to homotopy). As in Lemma~\ref{lem:compression}, $\varphi$ gives rise to a map between the spines of $X^f$ and $X^g$. That is, we have a map between the triple and pairwise intersections of the codimension $0$ pieces $X^f_i$, $X_i^g$ of each trisected manifold $X^f$ and $X^g$. Though this map is not unique, we obtain a unique map between the boundaries of each piece of $X^f$ and $X^g$ via Lemma~\ref{lem:heegaardboundary}. To obtain the map $X^f \to X^g$, we then implement Lemma~\ref{klug2} and Lemma~\ref{klug1}.

Composition of morphisms is preserved under $\mathscr M_4^\partial$. Indeed, any map $f\overset{\varphi}{\rightarrow}g$ between relative 3-splittings gives a map between homomorphisms $f_i \rightarrow g_i$ for each $i$, i.e., a commutative square of group homomorphisms 
\[
\begin{tikzcd}[row sep=scriptsize, column sep=scriptsize]
S^b_g \ar[r, "\varphi_0"]  \ar[d, "f_i"]&   S^b_g  \ar[d,  "g_i"'] \\
C_{g,p}^b \ar[r,  "\varphi_i"] &   C_{g,p}^b  \\
\end{tikzcd}
\] 
The homomorphism $f_i$ yields a three-dimensional handlebody, just as in the proof of Theorem~\ref{thm:set_equiv_relative}, and likewise for $g_i$. Since $\mathscr{M}_4^{\partial}(\varphi)$ preserves the relative trisections, its restriction to $X_i^{f} \subseteq X^f$ gives a map $\mathscr M_4^\partial(\varphi_i) : X^f_i \to X^g_i$. For any pair $(i, i+1),$ we also have a map $\partial X_i^f \to \partial X_i^g$, where the domain and codomain are the codimension 0 pieces of the trisections associated to $f$ and $g$, respectively. Given composable morphisms of relative 3-splitting homomorphisms $(f_1, f_2, f_3) \overset{\varphi}{\rightarrow} (g_1, g_2, g_3) \overset{\psi}{\rightarrow} (h_1, h_2, h_3)$, we apply this argument to both $\mathscr M_4^\partial (\psi) \circ \mathscr M_4^\partial (\varphi)$ and $\mathscr M_4^\partial (\psi \circ \varphi)$, which necessarily agree on $\partial X_i^f$. They must be homotopic by Lemma~\ref{klug1}.

Lastly, we show that $\mathscr M_4^{\partial}$ is an equivalence. For essential surjectivity, note that by Theorem~\ref{thm:set_equiv_relative} any object $X$ is isomorphic to $\mathscr{M}_4^{\partial}(\pi_1 (X))$ in $\mathsf{Tris4Man}^{\partial}$.

For faithfulness: let $\varphi, \psi: (f_1, f_2, f_3) \to (g_1, g_2, g_3)$ be morphisms of relative 3-splitting homomorphisms such that $\mathscr{M}_4^\partial(\varphi) = \mathscr{M}_4^\partial(\psi)$ up to homotopy. In particular, we have that $\mathscr{M}_4^\partial(\varphi)(X^f_i)$ is diffeomorphic to $\mathscr{M}_4^\partial(\psi)(X^f_i)$ for each $i=1,2,3$. Thus, when restricting to $\partial X^f_i$, we get diffeomorphic Heegaard splittings as constructed above. Waldhausen's Theorem shows that these Heegaard splittings are in fact homotopic. Since this is now a $3$--dimensional statement, we may invoke the equivalence of categories between $\mathsf{Spl3Man}$ and $\mathsf{2SplHom}$ established in\cite{klugtri} to conclude that $\varphi_i = \psi_i$ for each $i$. It follows that $\varphi=\psi$ as desired.

For fullness: let $f = (f_1, f_2, f_3), g = (g_1, g_2, g_3)$ be relative 3-splitting homomorphisms, and let $\psi: X^f \to X^g $ be a smooth map preserving the relative trisections of the 4-manifolds with boundary $X^f=X^f_1\cup X^f_2\cup X^f_3$ and $X^g=X^g_1\cup X^g_2 \cup X^g_3$. We wish to find a morphism $\varphi: (f_1, f_2, f_3)  \to (g_1, g_2, g_3)$ in $\mathsf{3SplHom}^{\partial}$ such that $\mathscr{M}_4^{\partial}(\varphi)= \psi$. Choosing $\varphi = \pi_1(\psi)$ gives $\varphi: \pi_1(X^f_1\cup X^f_2\cup X^f_3) \rightarrow \pi_1(X^g_1\cup X^g_2 \cup X^g_3)$. Since we have quasi-invertibility on the objects of these categories, this gives two maps $\psi, \mathscr{M}_4^{\partial}(\varphi): X^f  \rightarrow X^g.$
Using the same approach as in the faithfulness argument, we obtain maps on each of the Heegaard splittings for $\partial X^f_i$ which are homotopic. Appealing to the same $3$--dimensional argument gives $\psi = \mathscr{M}_4^\partial(\varphi)$ up to homotopy, completing the proof. 
\end{proof}

\begin{remark}[Empty Boundary, see Remarks~\ref{rem:empty} and \ref{rem:grouptri}]
Relative trisections of 4-manifolds with boundary and their associated group trisections generalize the closed case, even categorically. A closed 4-manifold $X$ with $(g, k)$-trisection defines a 4-manifold with $\partial M = \emptyset$ and $(g,k; 0, 0)$-trisection. If $Y$ is another 4-manifold with $(g', k')$-trisection, a smooth map $X \to Y$ of trisected 4-manifolds which preserves the trisection also preserves the empty boundary and relative trisections. This induces a fully faithful inclusion functor $\iota: \mathsf{Tris4Man} \to \mathsf{Tris4Man}^{\partial}.$
Similarly, any $(g, k)$-trisection of a group $G$ defines a $(g,k;0,0)$-trisection of $G$, yielding a fully faithful inclusion functor $\iota _{\text{gp}}: \mathsf{3SplHom} \to \mathsf{3SplHom}^{\partial}.$
Lastly, the functors $\iota, \iota_{\text{gp}}$ commute with $\mathscr M_4$, $\mathscr M_4^{\partial}$ and $\pi_1$. 
\end{remark}

\section{Examples}
\label{section:examples}

In this section we provide explicit descriptions of group trisections and their corresponding trisected $4$--manifold via $\mathscr M_4^\partial$. One important aspect of the construction of the trisected $4$--manifold from Theorem~\ref{thm:set_equiv_relative} is the corresponding \emph{relative trisection diagram}. We remind the reader that there is a one-to-one correspondence between relative trisection diagrams and relatively trisected $4$--manifolds \cite{cgp}. We begin by recalling the definition of a relative trisection diagram.

\begin{definition}\label{def:diagram}
A $(g,k;p,b)$-trisection diagram is a $4$--tuple $(\Sigma_g^b, \alpha, \beta, \gamma)$ such that 
	\begin{itemize}
		\item $\Sigma_g^b$ is a genus $g$ surface with $b>0$ boundary components,
		\item each of $\alpha, \beta, \gamma$ is a collection of $g-p$ simple, closed, essential curves on $\Sigma_g^b$,
		\item after handleslides of the curves and diffeomorphisms of the surface, each of $(\Sigma_g^b, \alpha, \beta),$ $(\Sigma_g^b, \beta, \gamma),$ and $ (\Sigma_g^b, \alpha, \gamma)$ may be put in ``standard position,'' as shown in Figure~\ref{fig:standardpos}. 
	\end{itemize}
\end{definition}

\begin{figure}[h!]
	\includegraphics{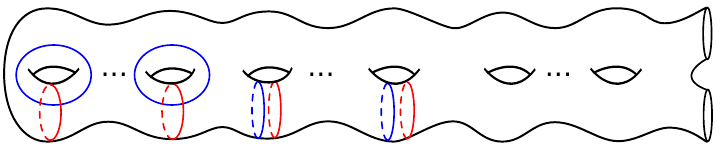}
	\caption{The standard position of a pair of multicurves in a a $(g,k;p,b)$ relative trisection diagram. \label{fig:standardpos}}
\end{figure}

\begin{example}
There is a  $(2,1;0,2)$-trisection of the trivial group. Fix the following group presentations: 
	\begin{align*}
		S_1^2 &= \left\langle x_1, y_1, w_1, w_2\left.\right|[x_1,y_1][x_2,y_2] = w_1 w_2\right\rangle\\
		C_{1,0}^2 &= \left\langle \delta_1, \omega_1, \omega_2\left.\right|1 = \omega_1 \omega_2\right\rangle.
	\end{align*}
The $3$--splitting is defined by $f_i: S_1^2 \to C_{1,0}^2$, and we may always assume that two of the maps, say $f_1$ and $f_2$, are in ``standard position,'' i.e., defined by
	\begin{itemize}
		\item$f_1(x_i) = 1$, $f_1(y_i) = \delta_i$, and $f_1(w_j) = \omega_j$
		\item$f_2(x_i) = \delta_i$, $f_2(y_i) = 1$, and $f_2(w_j) = \omega_j.$
	\end{itemize}
The group trisection is then entirely determined by the definition of the third surjective homomorphism:
	\begin{itemize}
		\item $f_3(x_1) = \delta_1,$
		\item $f_3(y_1) = \omega_1\delta_1^{-1},$
		\item $f_3(w_j) = \omega_j$, for $j=1,2$.
	\end{itemize}
Notice that the element $w_1^{-1}y_1x_1$ is in $\ker(f_3)$. This results in the relative trisection diagram of $B^4$ shown in Figure~\ref{fig:b4}.
\end{example}
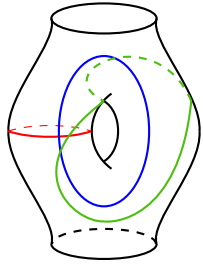
\begin{figure}[h!]\centering
\begin{tikzpicture}
	
	

	\draw [thick] (-.7,0) arc (180:360:.7 and .2);
	\draw [thick, dashed] (.69,0) arc (0:180:.69 and .2);
	\draw [thick] (0, 3) ellipse (.7cm and .2cm);
	\draw [thick] (-.7, 0) .. controls (-.6, .3) and (-1.3, 1) .. (-1.27, 1.5);
	\draw [thick] (-1.27, 1.5) .. controls (-1.3, 2) and (-.6, 2.7) .. (-.7, 3);
	\draw [thick] (.7, 0) .. controls (.6, .3) and (1.3, 1) .. (1.27, 1.5);
	\draw [thick] (1.27, 1.5) .. controls (1.3, 2) and (.6, 2.7) .. (.7, 3);
	\draw [thick] (.1, 1) .. controls (-.25, 1.25) and (-.25, 1.75).. (.1, 2);
	\draw [thick] (0,1.1) .. controls (.25, 1.25) and (.25, 1.75).. (0,1.9);
	
	\draw [thick, red] (-1.27, 1.5) .. controls (-1, 1.4) and (-.44, 1.4) .. (-.172, 1.5);
	\draw [dashed, red] (-1.27, 1.5) .. controls (-1, 1.6) and (-.44, 1.6) .. (-.172, 1.5);
	
	\draw [thick, blue] (0, 1.5) ellipse (.6cm and 1cm);
	
	\draw [thick, apple] (0, .3) .. controls (-.4,.28) and (-1.2, 1) .. (0, 1.9);
	\draw [thick,dashed, apple] (1.16, 1.9) .. controls (.7, 3) and (-.8,2.3).. (0, 1.9);
	\draw [thick, apple] (1.16, 1.9) .. controls (1, .3) and (.1, .28) .. (0, .3);
\end{tikzpicture}
\caption{A $(1,1;0,2)$-trisection diagram of $B^4$ associated to the trisected 4--manifold $\mathscr M_4^{\partial}(h_1, h_2, h_3)$.}
\label{fig:b4}
\end{figure}

\begin{example}
There is a $(2,0;0,1)$-trisection of the trivial group. Fix the group presentations from Definition~\ref{def:relgroup}:
	\begin{align*}
		S_2^1 &= \left\langle x_1, y_1, x_2, y_2, w_1\left.\right|[x_1,y_1][x_2,y_2] = w_1\right\rangle\\
		C_{2,0}^1 &= \left\langle \delta_1, \delta_2, \omega_1\left.\right|1 = \omega_1\right\rangle.
	\end{align*}
Again we assume standard position for the first two homomorphisms: 
	\begin{itemize}
		\item$f_1(x_i) = 1$, $f_1(y_i) = \delta_i$, and $f_1(w_j) = \omega_j$
		\item$f_2(x_i) = \delta_i$, $f_2(y_i) = 1$, and $f_2(w_j) = \omega_j.$
	\end{itemize}
We define our third homomorphism via
	\begin{itemize}
		\item $f_3(x_1)=\delta_1$
		\item $f_3(x_2)=\delta_2^{-1}\delta_1\delta_2$
		\item $f_3(y_1)=\delta_1$
		\item $f_3(y_2)=\delta_2$
		\item $f_3(w_1)=\omega_1$.
	\end{itemize}
It is straightforward to check that this is a relative group trisection of the trivial group. Notice that the elements $y_1w_1^{-1}y_2x_2^{-1}y_2^{-1}$ and $y_2x_1^{-1}$ are in $\ker(f_3)$. Moreover, these elements represent simple closed curves in $\Sigma_2^1$ using the identification of $\pi_1(\Sigma_2^1)$ with $S_2^1$ as in Figure~\ref{fig:Ex2Pic1}.
\begin{figure}[h!]
	\includegraphics[scale=.55]{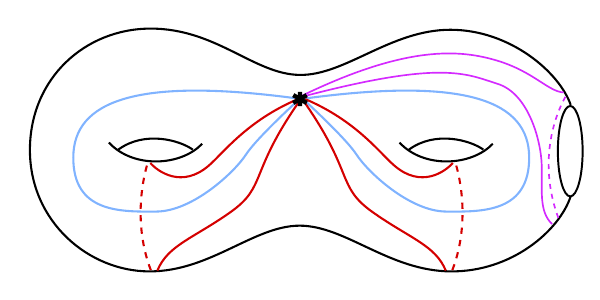}
	\caption{An identification of $S_2^1$ with $\pi_1(\Sigma_2^1)$.\label{fig:Ex2Pic1}}
\end{figure}
This results in the relative trisection diagram of $\overline{(S^2\times S^2) \backslash \{pt\}}$ given in Figure~\ref{fig:Ex2Pic2}.
\begin{figure}[h!]
	\includegraphics[scale=.55]{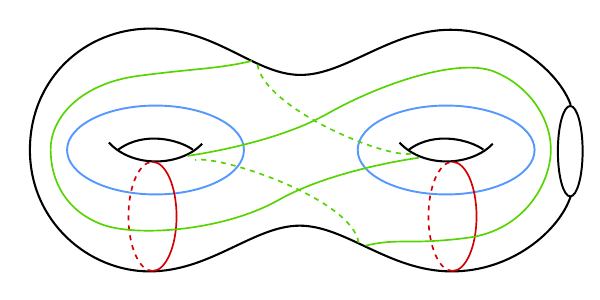}
	\caption{The corresponding relative trisection diagram corresponding to the defined $3$--splitting. We obtain the relatively trisected manifold by attaching three thickened handlebodies $H_\alpha \times I$, $H_\beta \times I$, and $H_\gamma \times I$ to $\Sigma_2^1\times D^2$ so that each pair of curves $(\alpha_1, \alpha_2), (\beta_1, \beta_2),$ and $(\gamma_1,\gamma_2)$ bound disjoint disks in the associated $H_\bullet \times \{0\}.$\label{fig:Ex2Pic2}}
\end{figure}

\begin{remark}
The above example provides a means of obtaining a $(g,k;0,1)$-trisection of a finitely presented group $G$ from a $(g,k)$-trisection of $G$. We begin by replacing the surface group $S_g = \langle x_i, y_i\left.\right| \prod[x_i,y_i] = 1\rangle$ with the free group $S_g^1$ by adding the generator $w_1$ and modifying the relation so that the product of the commutators is equal to $w_1$. The splitting homomorphisms remain the same on each $x_i$ and each $y_i$, and the definition of a relative group trisection requires that the surjections take $w_1$ to $\omega_1$. Since the original $3$-splitting $(f_1, f_2, f_3)$ yields $G$, the resulting relative $3$-splitting $(f'_1, f'_2, f'_3) \in\mathsf{3SplHom}^{\partial}$ also yields $G$, a fact which we leave to the reader to verify. Considering the associated (relatively) trisected 4-manifolds, one sees that $\mathscr M_4^{\partial}(f'_1, f'_2, f'_3)$ is diffeomorphic to $\mathscr M_4(f_1, f_2, f_3)$ with an open $4$--ball removed. Indeed, this is reflected in Figure~\ref{fig:Ex2Pic2}, which is the trisection diagram of $S^2\times S^2$ with a puncture.
\end{remark}
\end{example} 

\section{Final Remarks and Questions}
\label{section:remarks}

One of the main features of relatively trisected $4$--manifolds with boundary is the induced open book decomposition on the bounding $3$--manifold. Moreover, it is known that every open book decomposition of $\partial X$ is induced by a relative trisection of $X$. In \cite{cgp}, the authors give an explicit algorithm to obtain the open book decomposition induced by a relative trisection. The main tool in this monodromy algorithm is a collection of disjoint properly embedded arcs in the page of the open book. However, properly embedded arcs do not naturally fit in the ``apply $\pi_1$'' ethos that is necessary to understand group trisections.
\begin{question}[Translating the Monodromy Algorithm]
Is there an analog to the monodromy algorithm in the purely group theoretic setting, i.e., without factoring through $\mathsf{Tris4Man}^{\partial}$, which yields the correct boundary data when $\mathscr M_4^{\partial}$ is applied?
\end{question}

Another natural question is addressing the uniqueness of relative group trisections of a given finitely presented group. Relative trisections admit three distinct types of stabilization operations:
\begin{itemize}
	\item An interior stabilization which is equivalent to taking the connected sum with the unique genus $3$ trisection of $S^4.$ This does not affect the induced open book decomposition.
	\item A relative stabilization which is equivalent to taking the boundary connected sum with a $(1,1;0,2)$--relative trisection of $B^4$. This changes the induced open book decomposition by a positive or negative Hopf plumbing.
	\item A relative double twist which changes the spin$^c$ structure associated to the induced open book in a way that respects the structure of a trisection and preserves the $4$--manifold. (See \cite{relL} for more details.)
\end{itemize}
\begin{theorem*}[Uniqueness of Relative Trisections \cite{gk, nickthesis, relL}]
Any two relative trisections of the same $4$--manifold can be related by some number of interior stabilizations, relative stabilizations, and relative double twists.
\end{theorem*}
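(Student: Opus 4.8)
The plan is to adapt the Gay--Kirby Cerf-theoretic argument for closed trisections to the relative setting, treating the induced open book on $\partial X$ as boundary data that must be reconciled separately from the interior handle structure. The starting point is the standard dictionary between relative trisections of $X$ and boundary-compatible indefinite Morse $2$-functions $G: X \to B^2$: the three sectors of $B^2$ pull back to the pieces $X_i$, the radial walls pull back to the compression bodies $X_i \cap X_{i\pm 1}$, the central fiber is $\Sigma_g^b$, and the restriction $G|_{\partial X}$ is exactly the open book. Given two relative trisections $\mathcal{T}_0, \mathcal{T}_1$ of the same $X$, I would represent each by such a map $G_0, G_1$ and seek a generic homotopy $G_t$ between them, cataloguing which singular transitions occur and matching each to one of the three listed operations.

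The interior analysis is the closed theory of \cite{gk}: a generic homotopy of indefinite Morse $2$-functions passes through a finite list of codimension-one events (births and deaths of fold circles, cusp and swallowtail moves, merges), and after first stabilizing both sides one can arrange that every interior event is either an isotopy or handleslide of the trisection or an interior stabilization, i.e.\ a connected sum with the genus-$3$ trisection of $S^4$. The genuinely new ingredient is the boundary. Before running the interior argument I would first reconcile the two induced open books on the fixed $3$-manifold $\partial X$. By the Giroux--Goodman stable equivalence theorem, any two open books of $\partial X$ admit a common stabilization by positive and negative Hopf plumbings, and each such plumbing is realized at the level of the $4$-manifold by a boundary connected sum with the $(1,1;0,2)$-trisection of $B^4$, which is exactly a relative stabilization. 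The residual discrepancy is the spin$^{\mathrm c}$ data carried by the open book, which is precisely what a relative double twist modifies while fixing the underlying $4$-manifold; this accounts for the third move.

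Concretely, the steps in order are: (1) represent $\mathcal{T}_0, \mathcal{T}_1$ by boundary-compatible Morse $2$-functions; (2) use relative stabilizations to give the two boundary open books a common Hopf-plumbing stabilization, and relative double twists to match their spin$^{\mathrm c}$ data, so that $G_0|_{\partial X}$ and $G_1|_{\partial X}$ agree; (3) holding the boundary fixed, run a relative version of the Gay--Kirby homotopy rel $\partial X$, converting each interior codimension-one event into an interior stabilization or a trisection isotopy; and (4) conclude that $\mathcal{T}_0$ and $\mathcal{T}_1$ are related by a sequence of the three operations.

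The main obstacle I anticipate is step (3): making the Cerf theory genuinely relative, that is, controlling the fold and cusp locus as it approaches and travels along $\partial X$ and ensuring the homotopy can be taken to fix the (now matched) open book. One must check that boundary-parallel folds do not force additional, unlisted moves, and that after the open books are matched the remaining filling problem is governed purely by the closed theory, so that interior and boundary modifications genuinely decouple. This relative Cerf analysis, together with the spin$^{\mathrm c}$ bookkeeping, is where \cite{nickthesis} and \cite{relL} do the substantive work, and is considerably more delicate than the closed case.
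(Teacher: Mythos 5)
The paper does not actually prove this statement: it appears in Section 6 as quoted background, with the proof residing in the cited references \cite{gk, nickthesis, relL}. So your proposal can only be measured against those proofs, and at the level of architecture you have reproduced them correctly: represent the relative trisections by Morse $2$-functions to $B^2$ whose boundary restrictions are the induced open books, reconcile the boundary data, then run the Gay--Kirby Cerf-theoretic argument rel boundary so that interior events become interior stabilizations. That is essentially how \cite{nickthesis} and \cite{relL} proceed, with \cite{gk} supplying the interior uniqueness once the open books agree.

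There is, however, a genuine error, and it sits exactly at the step that makes the relative theorem delicate. You quote Giroux--Goodman as saying that \emph{any} two open books on $\partial X$ admit a common stabilization by positive and negative Hopf plumbings, and you then treat the spin$^c$ data as a ``residual discrepancy'' to be cleaned up by relative double twists afterwards. The stable equivalence theorem says no such thing: two open books on a closed oriented $3$-manifold admit a common Hopf-plumbing stabilization \emph{if and only if} their associated plane fields are homologous, i.e., induce the same spin$^c$ structure (positive plumbing preserves the homotopy class of the plane field, and negative plumbing changes only the three-dimensional obstruction, so the spin$^c$ class is an invariant of stable equivalence). The spin$^c$ discrepancy is therefore the \emph{obstruction} to applying Giroux--Goodman, not a residue left over after applying it; as ordered, your step (2) stalls precisely when the two trisections induce open books with different spin$^c$ structures, which is the case the relative double twist was introduced to handle in \cite{relL}. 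The correct order is: first apply relative double twists to make the two plane fields homologous; then invoke Giroux--Goodman to obtain a common stabilization, realizing each Hopf plumbing by a relative stabilization (boundary connected sum with the $(1,1;0,2)$-trisection of $B^4$); and only then, with the boundary open books matched, run the interior rel-boundary argument that your steps (3) and (4) correctly anticipate.
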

Of course, any uniqueness statement necessarily involves the induced open book. Thus, taking the approach that a $3$--splitting induces a $2$--splitting, we can ask for a uniqueness statement for relative group trisections:
\begin{question}[Uniqueness in $\mathsf{3SplHom}^{\partial}$]
What are the appropriate group trisection operations which give a full uniqueness statement for $3$--splittings?
\end{question}
Using the categorical equivalence of Theorem~\ref{thm:equiv_relative}, we know that such stabilizations exist since we can perform each type of stabilization topologically and then apply $\pi_1$. However, it should be possible to formulate the stabilization operation in a purely group theoretic way.

Finally, it would be interesting to incorporate the work of \cite{grouptrisurf} into the above categorical framework. Indeed, the complement of a surface $S$ in a $4$--manifold $X$ yields a $4$--manifold with (possibly disconnected) boundary which can be relatively trisected. Additionally, if the surface is in bridge position with respect to a given trisection, then by work of Miller and Kim \cite{price}, we obtain a unique relative trisection of $X\backslash S$ from the bridge trisection of $(X,S).$
\begin{question}[Subcategory of Bridge Splittings]
Can the categorical framework of Section~\ref{subsection:cat_4man} be applied to the work of Blackwell et al. \cite{grouptrisurf}? More specifically, can the methods of Miller and Kim be used to identify the set $\mathtt{Man}^{(4,2)}$ of bridge trisected surfaces in $4$--manifolds as a subcategory of $\mathsf{Tris4Man}^{\partial}$?  Does this in turn allow us to realize $\mathtt{Alg}^{(4,2)}$ as a subcategory of $\mathsf{3SplHom}^{\partial}$?
\end{question}


\bibliographystyle{alpha}
\bibliography{references}

\newcommand{\etalchar}[1]{$^{#1}$}
\begin{thebibliography}{BKK{\etalchar{+}}23}

\bibitem[AGK18]{grouptri}
Aaron Abrams, David~T. Gay, and Robion Kirby.
\newblock Group trisections and smooth 4-manifolds.
\newblock {\em Geom. Topol.}, 22(3):1537--1545, 2018.

\bibitem[BKK{\etalchar{+}}23]{grouptrisurf}
Sarah Blackwell, Robion Kirby, Michael Klug, Vincent Longo, and Benjamin
  Ruppik.
\newblock A group-theoretic framework for low-dimensional topology, 2023.

\bibitem[Cas16]{nickthesis}
Nickolas~A. Castro.
\newblock {\em Relative trisections of smooth 4-manifolds with boundary}.
\newblock PhD thesis, University of Georgia, 2016.

\bibitem[CGPC18]{cgp}
Nickolas~A. Castro, David~T. Gay, and Juanita Pinz\'{o}n-Caicedo.
\newblock Diagrams for relative trisections.
\newblock {\em Pacific J. Math.}, 294(2):275--305, 2018.

\bibitem[CIMT19]{relL}
Nickolas~A. {Castro}, Gabriel {Islambouli}, Maggie {Miller}, and Maggy
  {Tomova}.
\newblock {Relative $\mathcal{L}$ Invariant of a Manifold with Boundary}.
\newblock {\em arXiv e-prints}, page arXiv:1908.05371, August 2019.
\newblock Submitted.

\bibitem[GK16]{gk}
David Gay and Robion Kirby.
\newblock Trisecting 4--manifolds.
\newblock {\em Geom. Topol.}, 20(6):3097--3132, 2016.

\bibitem[Hat04]{Hatcher3M}
Allen Hatcher.
\newblock The classification of 3-manifolds — a brief overview.
\newblock 2004.

\bibitem[Jac69a]{Jaco1}
William Jaco.
\newblock Heegaard splittings and splitting homomorphisms.
\newblock {\em Trans. Amer. Math. Soc.}, 144:365--379, 1969.

\bibitem[Jac69b]{Jaco2}
William Jaco.
\newblock Stable equivalence of splitting homomorphisms.
\newblock {\em Topology of Manifolds}, pages 153--156, 1969.

\bibitem[Klu18]{klugtri}
Michael Klug.
\newblock Functoriality of group trisections.
\newblock {\em Proc. Natl. Acad. Sci. USA}, 115(43):10875--10879, 2018.

\bibitem[KM18]{price}
Seungwon {Kim} and Maggie {Miller}.
\newblock {Trisections of surface complements and the {P}rice twist}.
\newblock {\em arXiv e-prints}, page arXiv:1805.00429, May 2018.
\newblock To appear in {\emph{Algebr. Geom. Topol.}}

\bibitem[LP72]{LP}
Fran\c{c}ois Laudenbach and Valentin Po\'enaru.
\newblock A note on {$4$}-dimensional handlebodies.
\newblock {\em Bull. Soc. Math. France}, 100:337--344, 1972.

\bibitem[LR02]{LR}
Christopher~J. Leininger and Alan~W. Reid.
\newblock The co-rank conjecture for 3-manifold groups.
\newblock {\em Algebr. Geom. Topol.}, 2:37--50, 2002.

\bibitem[Sta00]{Stallings}
John~R. Stallings.
\newblock Surfaces mapping to wedges of spaces. {A} topological variant of the
  {G}rushko-{N}eumann theorem.
\newblock In {\em Groups---{K}orea '98 ({P}usan)}, pages 345--353. de Gruyter,
  Berlin, 2000.

\bibitem[Wal68]{waldhausen}
Friedhelm Waldhausen.
\newblock Heegaard-{Z}erlegungen der {$3$}-{S}ph\"are.
\newblock {\em Topology}, 7:195--203, 1968.

\end{thebibliography}


\end{document}